\def\rd{{\mathbb R}^d}
\def\zz{{\mathbb Z}}
\def\cc{{\mathbb C}}
\def\nn{{\mathbb N}}
\def\cx{{\mathcal X}}
\def\fz{\infty}
\def\az{\alpha}
\def\supp{{\mathop\mathrm{\,supp\,}}}
\def\dist{{\mathop\mathrm{\,dist\,}}}
\def\loc{{\mathop\mathrm{\,loc\,}}}
\def\lz{\lambda}
\def\dz{\delta}
\def\ez{\varepsilon}
\def\bz{\beta}
\def\fai{\varphi}
\def\gz{{\gamma}}
\def\oz{{\omega}}
\def\vz{\varphi}
\def\wz{\widetilde}
\def\hs{\hspace{0.3cm}}
\def\ls{\lesssim}
\def\gfz{\genfrac{}{}{0pt}{}}
\def\lo{{L^1(\mu)}}
\def\wlo{{L^{1,\,\fz}(\mu)}}
\def\lt{{L^2(\mu)}}
\def\lin{{L^\fz(\mu)}}
\def\lp{{L^p(\mu)}}
\def\rbmo{{\mathop\mathrm{RBMO}\,(\mu)}}
\newcommand{\n}{\nonumber}
\def\dint{\displaystyle\int}
\def\dfrac{\displaystyle\frac}
\def\dsup{\displaystyle\sup}
\def\r{\right}
\def\lf{\left}
\newtheorem{thm}{Theorem}[section]
\newtheorem{lem}[thm]{Lemma}
\newtheorem{rem}[thm]{Remark}
\newtheorem{defn}[thm]{Definition}
\numberwithin{equation}{section}
\begin{document}

\arraycolsep=1pt

\title{\Large\bf Boundedness of Maximal Calder\'on-Zygmund
Operators on Non-homogeneous Metric Measure Spaces\footnotetext{\hspace{-0.35cm}
\emph{2010 Mathematics Subject Classification}. {Primary 42B20; Secondary 42B25, 43A99.}
\endgraf\emph{Key words and phrases.} upper doubling measure,
geometrically doubling condition, maximal Calder\'on-Zygmund operator.
\endgraf
This project is  supported by the National Natural Science Foundation
of China (Grant No. 11171027) and the third (corresponding) author
is also supported by the Specialized Research Fund for the Doctoral Program of Higher Education
of China (Grant No. 20120003110003).}}
\author{Suile Liu,\ Yan Meng\ and\ Dachun Yang\footnote{Corresponding author}}
\date{ }
\maketitle

\vspace{-0.8cm}

\begin{center}
\begin{minipage}{13.5cm}\small
{\noindent{\bf Abstract.}  {\small Let $(\cx,\,d,\,\mu)$ be a metric measure
space and satisfy the so-called upper doubling condition and the
geometrically doubling condition. In this paper, the authors show that for the maximal
Calder\'on-Zygmund operator associated with a singular integral
whose kernel satisfies the standard size condition and the H\"ormander
condition, its $L^p(\mu)$ boundedness with $p\in(1,\infty)$ is
equivalent to its boundedness from $L^1(\mu)$ into
$L^{1,\infty}(\mu)$. Moreover, applying this, together with a new Cotlar type inequality, the
authors show that if the Calder\'on-Zygmund operator
$T$ is bounded on $L^2(\mu)$,
then the corresponding maximal Calder\'on-Zygmund is bounded on $L^p(\mu)$
for all $p\in(1,\infty)$, and bounded from $L^1(\mu)$ into $L^{1,\infty}(\mu)$.
These results essentially improve the existing results.}}
\end{minipage}
\end{center}

\section{Introduction}\label{sec1}

\hskip\parindent
It is well known that the Calder\'on-Zygmund theory is one of the core research areas
in harmonic analysis. It is intimately connected with partial differential equations,
operator theory, several complex variables and other fields.
During the development of the Calder\'on-Zygmund theory,
the only thing that has remained unchanged until
recently was the doubling property of the underlying measure.
We recall that the measure $\mu$ is said to satisfy the \emph{doubling condition} if there
exists a positive constant $C_\mu$ such that, for all $x\in\cx$ and $r\in(0,\,\infty)$,
\begin{equation}\label{eq1.1}
\mu(B(x,\,2r))\le C_\mu \mu(B(x,\,r)),
\end{equation}
where $(\cx,d)$ is some metric space endowed with
a nonnegative Borel measure $\mu$ and $B(x,\,r):=\{y\in\cx:\,\, d(y, x)< r\}$.
However, some research now indicates that
the doubling condition \eqref{eq1.1}
is superfluous for most results of the Calder\'on-Zygmund theory.
Let $\mu$ be a non-negative Radon measure on $\rd$, which is assumed to satisfy only
some polynomial growth condition, namely,
there exist positive constants $C_0$ and $n\in(0,\,d]$
such that, for all $x\in\rd$ and $r\in(0,\,\fz)$,
\begin{equation}\label{eq1.2}
\mu(B(x,\,r))\le C_0r^n,
\end{equation}
where $B(x,\,r):=\{y\in\rd:\ |y-x|<r\}$.
Obviously, such a measure is not necessary to satisfy the doubling condition
\eqref{eq1.1}. Under the assumption \eqref{eq1.2} on the measure $\mu$,
many results on the classical Calder\'on-Zygmund theory
have been proved to still hold; see, for example,
\cite{cmy,hmy,hyy09,ntv2,ntv03,t01,t01b,t03,mm,yyh13}
and some references therein. The motivation for developing the analysis on such $\rd$ can
be found in \cite{t03} and \cite{v}.
We only point out that the analysis in a such setting plays
an essential role in solving the long-standing open Painlev\'e's problem by Tolsa in \cite{t03}.

Notice that $\rd$ with the underlying measure as in \eqref{eq1.2} can not be encompassed
in the framework of spaces of homogeneous type in the sense of Coifman and Weiss \cite{cw71},
and vice verse. Recall that a metric space $(\cx,\,d)$ equipped with a nonnegative measure $\mu$ is
called a \emph{space of homogeneous type} if $(\cx,\,d,\,\mu)$
satisfies the doubling condition \eqref{eq1.1}.
The Calder\'on-Zygmund theory on $\rd$ with a measure $\mu$ satisfying \eqref{eq1.2}
is not in all respects a generalization
of the corresponding theory on spaces of homogeneous type
since the condition \eqref{eq1.2} on the measure is not more general than \eqref{eq1.1}.

Recently, Hyt\"onen \cite{h10} introduced a new class of metric measure
spaces satisfying the so-called upper doubling and the
geometrically doubling conditions (see also Definitions \ref{defn1.1} and
\ref{defn1.2} below).
This new class of metric measure spaces include both the spaces of homogeneous type and
metric spaces with the measures satisfying \eqref{eq1.2} as special cases.
In this setting, Hyt\"onen \cite{h10} introduced
the regularized $\mathop\mathrm{BMO}$ space.
Hyt\"onen and Martikainen \cite{hm} further
established a version of $T(b)$ theorem for Calder\'on-Zygmund operators.
Recently, Lin and Yang \cite{ly} introduced the space $\mathop\mathrm{RBLO}\,(\mu)$
and applied this space to the boundedness of the maximal
Calder\'on-Zygmund operators. Moreover, Hyt\"onen, Da. Yang and Do. Yang \cite{hyy}
studied the atomic Hardy space $H^1(\mu)$,
and Hyt\"onen, Liu, Da. Yang and Do. Yang \cite{hlyy}
established some equivalent characterizations for the boundedness of the Calder\'on-Zygmund
operators. Some of results in \cite{hlyy,hyy} were also independently obtained by
Bui and Duong \cite{ad} via different approaches.
Very recently, Hu, Meng and Yang \cite{hmy12-2} established
a new characterization of the space $\rbmo$ and proved
that the $L^p(\mu)$-boundedness with $p\in(1,\,\infty)$ of the
Calder\'on-Zygmund operator is equivalent
to its various endpoint estimates.
Some weighted norm inequalities for the multilinear
Calder\'on-Zygmund operators, via some weighted estimates involving the John-Str\"omberg maximal
operators and the John-Str\"omberg sharp maximal operators,
were also presented by Hu, Meng and Yang in \cite{hmy12-1}.
Fu, Yang and Yuan \cite{fyy12} proved that the multilinear commutators of Calder\'on-Zygmund operators
with $\rbmo$ functions are bounded on Orlicz spaces, especially, on $L^p(\mu)$ with $p\in(1,\,\infty)$,
and established the weak type endpoint estimate for the multilinear
commutators of Calder\'oon-Zygmund operators with Orlicz type functions in
$\textrm{Osc}_{\exp\,L^r} (\mu)$ for $r \in[1,\,\infty)$. More developments
of the harmonic analysis over this setting
are summarized in the monograph \cite{yyh13}.

The goal of this paper is two folds. One is to prove that, on the upper and
geometrically doubling metric measure spaces, for the maximal Calder\'on-Zygmund operator
whose kernel satisfies the standard size condition and the H\"ormander
condition, its $\lp$ boundedness with $p\in(1,\,\fz)$ is equivalent
to its boundedness from $\lo$ into $\wlo$.
Based on this equivalence and a Cotlar type inequality established in this paper,
we then establish the boundedness of the maximal Calder\'on-Zygmund operator
on $\lp$ for all $p\in(1,\,\fz)$ and  from $\lo$ into $\wlo$ under the assumption that
the Calder\'on-Zygmund operator is bounded on $L^2(\mu)$.

To state the results of this paper, we first recall some necessary
notions and notation. We start with the notion
of the upper doubling and geometrically doubling metric
measure space introduced in \cite{h10}.

\begin{defn}\label{defn1.1}\rm
A metric measure space $(\cx,\,d,\,\mu)$ is said to be \emph{upper
doubling} if $\mu$ is a Borel measure on $\cx$ and there exist a
\emph{dominating function} $\lz:\,\,\cx\times(0,\,\fz)\to (0,\,\fz)$
and a positive constant $C_\lz$, depending on $\lz$,
such that for each $x\in\cx$,
$r\to\lz(x,\,r)$ is non-decreasing and, for all $x\in\cx$ and
$r\in(0,\,\fz)$,
\begin{equation}\label{eq1.3}
\mu(B(x,\,r))\le\lz(x,\,r)\le C_\lz\lz(x,\,r/2),
\end{equation}
here  and in what follows, $B(x,\,r):=\{y\in \cx:\ d(y,\,x)<r\}$.
\end{defn}

\begin{rem}\label{rem1.1}\rm

(i) Obviously, a space of homogeneous type, $(\cx,\,d,\,\mu)$,
is a special case of upper doubling spaces if taking  $\lz(x,\,r):= \mu(B(x,\,r))$.
On the other hand,  $(\rd,\,|\cdot|,\,\mu)$ with $\mu$
satisfying the  polynomial growth condition \eqref{eq1.2} is also
an upper doubling measure space by taking $\lz(x,\,r):= C_0r^n$,
where $C_0$ is as in \eqref{eq1.2}.

(ii) Let $(\cx,\,d,\,\mu)$ be an upper doubling space and $\lz$ a
dominating function on $\cx\times(0, \fz)$ as in Definition
\ref{defn1.1}. It was proved in \cite{hyy} that there exists another dominating function
$\wz\lz$ related to $\lz$
such that $\wz\lz\le\lz$, $C_{\wz\lz}\le C_\lz$ and,
for all $x$, $y\in\cx$ with $d(x,\,y)\le r$,
\begin{equation}\label{eq1.4}
\wz\lz(x,\,r)\le C_{\wz\lz}\wz\lz(y,\,r).
\end{equation}
Thus, in the below of this paper, we \emph{always assume that $\lz$ satisfies \eqref{eq1.4}}.
\end{rem}

\begin{defn}\label{defn1.2}\rm
A metric space $(\cx,\,d)$ is said to be \emph{geometrically doubling}
if there exists some $N_0\in\nn:=\{1,\,2,\,\cdots\}$ such that for
any ball $B(x,\,r)\subset\cx$, there exists a finite ball covering
$\{B(x_i,\,r/2)\}_i$ of $B(x,\,r)$ such that the \emph{cardinality} of this
covering is at most $N_0$.
\end{defn}

\begin{rem}\label{rem1.2}\rm
Let $(\cx,\,d)$ be a metric space. In \cite{h10},
Hyt\"onen proved that the following statements are mutually
equivalent:
\vspace{-0.3cm}
\begin{itemize}
  \item [(i)] $(\cx,\,d)$ is geometrically doubling.
  \vspace{-0.3cm}
  \item [(ii)] For any $\ez\in (0,\,1)$ and any ball
  $B(x,\,r)\subset\cx$, there exists a finite ball
  covering $\{B(x_i,\,\ez r)\}_i$ of $B(x,\,r)$ such that
  the cardinality of this covering is at most $N_0\ez^{-n}$, here
  and in what follows, $N_0$ is as in Definition \ref{defn1.2} and
  $n:=\log_2N_0$.
  \vspace{-0.3cm}
  \item [(iii)] For every $\ez\in (0,\,1)$, any ball
  $B(x, r)\subset\cx$ contains at most
  $N_0\ez^{-n}$ centers $\{x_i\}_i$ of disjoint
  balls $\{B(x_i,\,\ez r)\}_i$.
  \vspace{-0.3cm}
  \item [(iv)]  There exists $M\in\nn$ such that any ball
  $B(x,\,r)\subset\cx$ contains at most
  $M$ centers $\{x_i\}_i$ of disjoint
  balls $\{B(x_i,\,r/4)\}_{i=1}^M$.
\end{itemize}
\vspace{-0.3cm}
\end{rem}

We now recall the notions of Calder\'{o}n-Zygmund operators and the corresponding
maximal Calder\'{o}n-Zygmund operators in the present context.

Let $\triangle:=\{(x, x):\,\, x\in\cx\}$ and $K$ be a $\mu$-locally integrable function mapping
from $(\cx\times\cx)\setminus\triangle$ to $\cc$,
which satisfies the \emph{size condition} that  there exists a positive constant $C$
such that, for all $x$, $y\in\cx$ with $x\not=y$,
\begin{equation}\label{eq1.5}
|K(x,\,y)|\le C\dfrac1{\lz(x, d(x,\,y))},
\end{equation}
and the \emph{H\"ormander condition} that there exists a positive constant $C$
such that, for all $x,\,\wz x\in\cx$ with $x\not=\wz x$,
\begin{equation}\label{eq1.6}
\int_{d(x,\,y)\ge 2d(x,\,\wz x)}\lf[|K(x,\,y)-K(\wz x,\,y)|
+|K(y,\,x)-K(y,\,\wz x)|\r]\,d\mu(y)\le C.
\end{equation}
A linear operator $T$ is called a \emph{Calder\'on-Zygmund operator}
with the kernel $K$ satisfying \eqref{eq1.5} and \eqref{eq1.6} if,
for all $f\in L^\fz_b(\mu)$, the \emph{space of bounded functions
with bounded supports}, and $x\notin\supp f$,
\begin{equation}\label{eq1.7}
T(f)(x):= \dint_\cx K(x,\,y)f(y)\,d\mu(y).
\end{equation}
Let $\ez\in(0,\,\infty)$. The \emph{truncated operator} $T_\ez$ is defined by setting,
for all $f\in L^\fz_b(\mu)$ and $x\in\cx$,
$$
T_\ez(f)(x):=\int_{d(x,\,y)>\ez}K(x,\,y)f(y)\,d\mu(y).
$$
Moreover, the \emph{maximal Calder\'{o}n-Zygmund operator} $T_\ast$,
associated with $\{T_\ez\}_{\ez>0}$, is defined
by setting, for all $f\in L^\fz_b(\mu)$ and $x\in\cx$,
\begin{equation}\label{eq1.9}
T_\ast(f)(x):=\sup_{\ez\in(0,\,\infty)}|T_\ez(f)(x)|.
\end{equation}

The main results of this paper are as follows.

\begin{thm}\label{thm1.1}
Let $K$ be a $\mu$-locally integrable function mapping from
$(\cx\times\cx)\setminus\triangle$ to $\cc$,
which satisfies \eqref{eq1.5} and \eqref{eq1.6}, and
$T$ as in \eqref{eq1.7}.
Suppose that $T$ is bounded on $\lt$. Then
\vspace{-0.3cm}
\begin{itemize}
\item [\rm(i)] the corresponding maximal
Calder\'on-Zygmund operator $T_\ast$ as in \eqref{eq1.9}
is bounded on $\lp$ for all $p\in(1,\,\fz)$;
\vspace{-0.3cm}
\item [\rm(ii)] $T_\ast$ is bounded from $\lo$ into $\wlo$.
\end{itemize}
\vspace{-0.3cm}
\end{thm}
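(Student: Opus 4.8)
\begin{pf}
The plan is to combine a Cotlar type pointwise estimate for $T_\ast$ with the equivalence established in the first part of this paper, namely, that the $\lp$ boundedness of $T_\ast$ for one (equivalently, every) $p\in(1,\,\fz)$ is equivalent to the boundedness of $T_\ast$ from $\lo$ into $\wlo$. Granted this, it suffices to prove that $T_\ast$ is bounded on $\lp$ for a single exponent $p\in(1,\,\fz)$; both (i) and (ii) then follow at once. So essentially all the work lies in the Cotlar estimate, and this is the point where the hypothesis that $T$ is bounded on $\lt$ is used.

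Thus I would first establish a Cotlar type inequality of the form: there exists $\rho\in[2,\,\fz)$ such that, for every $r\in(1,\,\fz)$, there is a positive constant $C=C(r)$ with
\begin{equation*}
T_\ast(f)(x)\le C\lf[M_{(\rho)}(Tf)(x)+M_r(f)(x)\r]
\end{equation*}
for all $f\in L^\fz_b(\mu)$ and $\mu$-almost every $x\in\cx$, where $M_{(\rho)}$ is the non-centered maximal operator over $(\rho,\,\beta)$-doubling balls (for a suitably large $\beta$) and $M_r(f):=\{M(|f|^r)\}^{1/r}$, $M$ being the centered maximal operator normalized by $\lz$. In the present setting these operators are known to be bounded on $\lp$ for all $p\in(1,\,\fz)$, and $M_r$ for all $p\in(r,\,\fz)$; see \cite{hyy,yyh13}. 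To prove the inequality I would argue along the lines of the non-homogeneous Cotlar argument (see \cite{t01,ntv03}): fix $x$ and $\ez\in(0,\,\fz)$, use $T_\ez(f)(x)=T(f\chi_{\cx\setminus B(x,\,\ez)})(x)$, choose a suitable ball $B\ni x$ of radius comparable to $\ez$, and write $f=f\chi_{2B}+f\chi_{\cx\setminus 2B}=:f_1+f_2$. The contribution of $f_1$ to $T_\ez(f)(x)$ is then dominated by $CM_r(f)(x)$ using the size condition \eqref{eq1.5} and the geometrically doubling condition; and, since $T_\ez(f_2)(x)=T(f_2)(x)$, the estimate, valid for $\mu$-a.e. $y\in B$,
\begin{equation*}
|T(f_2)(x)|\le|T(f_2)(x)-T(f_2)(y)|+|T(f)(y)|+|T(f_1)(y)|
\end{equation*}
gives, after averaging over $y\in B$, the term $M_{(\rho)}(Tf)(x)$ from $|T(f)(y)|$, a term $\ls M_r(f)(x)$ from $|T(f_1)(y)|$ (by H\"older's inequality and the $L^r(\mu)$ boundedness of $T$, which follows from its $\lt$ boundedness and the Calder\'on--Zygmund theory on upper doubling, geometrically doubling metric measure spaces, see \cite{hlyy,ad}), and a term $\ls M_r(f)(x)$ from the oscillation $|T(f_2)(x)-T(f_2)(y)|$ (by Fubini's theorem and the H\"ormander condition \eqref{eq1.6}). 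Taking the supremum over $\ez$ yields the claimed inequality.

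The main obstacle is this Cotlar estimate. Since only the \emph{integrated} H\"ormander condition \eqref{eq1.6} is assumed --- and no pointwise smoothness of $K$ --- one has to organize the splitting and, above all, select the comparison ball $B$ with great care, so that simultaneously: the annulus feeding $f_1$ involves only a bounded number of dyadic layers; $\mu(\rho B)$ and $\mu(B)$ stay comparable (which essentially forces $B$ to be a doubling ball); and the oscillation integral can still be dominated by $M_r(f)(x)$ with some $r>1$ rather than merely by $\|f\|_{\lin}$. Granting the Cotlar estimate, the conclusion is immediate: given $p\in(1,\,\fz)$, pick $r\in(1,\,p)$, so that $M_{(\rho)}$ and $M_r$ are bounded on $\lp$; together with the $\lp$ boundedness of $T$ this yields $\|T_\ast(f)\|_{\lp}\ls\|Tf\|_{\lp}+\|f\|_{\lp}\ls\|f\|_{\lp}$, which is (i). Finally (ii) follows from (i) (equivalently, from the $\lt$ boundedness of $T_\ast$) by the equivalence recalled at the outset; note that (ii) does \emph{not} follow from the Cotlar estimate alone, because $M_r$ with $r>1$ fails to map $\lo$ into $\wlo$, which is exactly why the equivalence theorem is needed here.
\end{pf}
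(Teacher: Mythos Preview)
Your overall architecture is sound, but the Cotlar inequality you propose cannot be proved under the H\"ormander condition \eqref{eq1.6} alone, and this is precisely the point of the paper. The problematic step is the oscillation term: for $y\in B$ and $f_2=f\chi_{\cx\setminus 2B}$ one has
\[
|T(f_2)(x)-T(f_2)(y)|\le\int_{\cx\setminus 2B}|K(x,u)-K(y,u)|\,|f(u)|\,d\mu(u),
\]
and the \emph{only} information \eqref{eq1.6} gives is that the kernel difference has bounded $L^1(d\mu(u))$-norm. Pairing an $L^1$ kernel against $|f|$ yields $\|f\|_\lin$, nothing less; there is no Fubini trick here, because \eqref{eq1.6} controls integration in the \emph{outer} variable $u$, not in the averaging variable $y$. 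Any attempt to extract a maximal function bound (say via H\"older against $|f|^r$, or via annular decomposition) requires pointwise decay of $|K(x,u)-K(y,u)|$ as $u$ moves away from $x$, i.e.\ the stronger regularity \eqref{eq1.11}. Under \eqref{eq1.11} the inequality $T_\ast f\ls M_{(\rho)}(Tf)+M_r f$ is indeed known (this is essentially the Cotlar inequality in \cite{ad,ntv2,t01}), but the whole point of Theorem~\ref{thm1.1} is to dispense with \eqref{eq1.11}.

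The paper therefore proceeds in the \emph{opposite} direction from yours. It proves only the weak Cotlar inequality (Theorem~\ref{thm3.1})
\[
T_\ast(f)(x)\le C_3\lf[M_{(5)}(Tf)(x)+\|f\|_\lin\r],
\]
which is exactly what \eqref{eq1.6} affords. This is useless for $\lp$ boundedness directly, but it is perfectly suited to proving the weak $(1,1)$ estimate for $T_\ast$ via the Calder\'on--Zygmund decomposition of Lemma~\ref{lem2.2}: the good part $g$ satisfies $\|g\|_\lin\le C_4 t$, so $T_\ast(g)>(C_4+1)C_3t$ forces $M_{(5)}(Tg)>t$, which is handled by $\lt$ boundedness; the bad part is treated exactly as in the proof of Theorem~\ref{thm1.2}. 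Only \emph{after} obtaining the weak $(1,1)$ does the paper invoke Theorem~\ref{thm1.2} to deduce $\lp$ boundedness. So the equivalence theorem is used in the direction (ii)~$\Rightarrow$~(iii), not (i)~$\Rightarrow$~(ii) as in your plan.
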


Indeed, Bui and Duong \cite[Remark 6.7]{ad} have obtained the boundedness
on Lebesgue spaces $L^p(\mu)$ for $p\in(1,\,\infty)$ of $T_\ast$
with the kernel satisfying \eqref{eq1.5} and the following \emph{stronger regularity condition}, that is,
there exist positive constants $C$ and $\tau\in(0,1]$
such that, for all $x,\,\wz{x},\,y\in\cx$ with $d(x,\,y)\ge 2d(x,\,\wz x)$,
\begin{equation}\label{eq1.11}
|K(x,\,y)-K(\wz x,\,y)|+|K(y,\,x)-K(y,\,\wz x)|
\le C\frac{[d(x,\,\wz x)]^\tau}{[d(x,\,y)]^\tau\lz(x,\,d(x,y))}.
\end{equation}
A new example of operators with the kernels satisfying \eqref{eq1.5} and \eqref{eq1.11}
is the so-called Bergman-type operator appearing in \cite{vw09};
see also \cite{hm} for an explanation.
Theorem \ref{thm1.1} essentially improves \cite[Remark 6.7]{ad}, since the kernel
in Theorem \ref{thm1.1}  is assumed to satisfy the
weaker regularity condition \eqref{eq1.6}.

We remark that if $\cx$ is separable and the kernel $K$ satisfies
\eqref{eq1.5} and \eqref{eq1.11}, Theorem \ref{thm1.1}
has been established in \cite[Corollary 1.7]{hlyy}. Thus,
Theorem \ref{thm1.1} also essentially improves
\cite[Corollary 1.7]{hlyy}.

To prove Theorem \ref{thm1.1}, we establish the following
equivalent characterization of the $\lp$-boundedness with $p\in(1,\,\fz)$
and the weak type (1,1) estimate for the operator $T_\ast$.

\begin{thm}\label{thm1.2}
Let $K$ be a $\mu$-locally integrable function mapping from
$(\cx\times\cx)\setminus\triangle$ to $\cc$,
which satisfies \eqref{eq1.5} and \eqref{eq1.6}, and
$T_\ast$ be as in \eqref{eq1.9}.
Then the following three statements are equivalent:
\vspace{-0.3cm}
\begin{itemize}
\item [\rm(i)] $T_\ast$ is bounded on $L^{p_0}(\mu)$ for some $p_0\in(1,\,\fz)$;
\vspace{-0.3cm}
\item [\rm(ii)] $T_\ast$ is bounded from $\lo$ into $\wlo$;
\vspace{-0.3cm}
\item [\rm(iii)] $T_\ast$ is bounded on $\lp$ for all $p\in(1,\,\fz)$.
\end{itemize}
\vspace{-0.3cm}
\end{thm}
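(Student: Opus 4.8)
The plan is to establish the cyclic chain of implications $\mathrm{(iii)}\Rightarrow\mathrm{(i)}\Rightarrow\mathrm{(ii)}\Rightarrow\mathrm{(iii)}$; the first of these is immediate, since taking any fixed $p_0\in(1,\fz)$ in $\mathrm{(iii)}$ yields $\mathrm{(i)}$. The two nontrivial implications share one difficulty: the supremum over the truncation parameter $\ez$ built into $T_\ast$, combined with the fact that only the H\"ormander condition \eqref{eq1.6} (not the pointwise regularity \eqref{eq1.11}) is assumed. I would handle it by \emph{linearising} $T_\ast$, i.e., replacing $\ez$ by a measurable function $\ez(\cdot)$, and then, for each fixed $x$ and each ball $B_j$ (respectively $B$) arising in the decomposition at hand, splitting according to whether $B(x,\ez(x))$ contains $B_j$, is disjoint from $B_j$, or meets its boundary. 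In the first case the corresponding truncated piece vanishes; in the second, cancellation together with \eqref{eq1.6} applies just as for a single $T_\ez$; the third case, where $d(x,x_j)\approx\ez(x)$, is the genuinely new one and, I expect, the main obstacle.

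For $\mathrm{(i)}\Rightarrow\mathrm{(ii)}$, I would fix $f\in\lo$ and $t\in(0,\fz)$ and apply the Calder\'on--Zygmund decomposition at level $t$ available on upper and geometrically doubling metric measure spaces (see \cite{hyy}; see also \cite{yyh13}), writing $f=g+b$ with $b=\sum_j b_j$, each $b_j$ supported in a ball $B_j=B(x_j,r_j)$ with vanishing integral, $\|b_j\|_{\lo}\ls t\,\mu(B_j)$, $\sum_j\|b_j\|_{\lo}\ls\|f\|_{\lo}$, $\sum_j\mu(B_j)\ls t^{-1}\|f\|_{\lo}$, bounded overlap of $\{2B_j\}_j$, and $\|g\|_{\lin}\ls t$, $\|g\|_{\lo}\ls\|f\|_{\lo}$. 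For the good part, Chebyshev's inequality, hypothesis $\mathrm{(i)}$ and $\|g\|_{L^{p_0}(\mu)}^{p_0}\ls t^{p_0-1}\|f\|_{\lo}$ give $\mu(\{x\in\cx:\ T_\ast(g)(x)>t/2\})\ls t^{-1}\|f\|_{\lo}$. Setting $\boz:=\bigcup_j 2B_j$, one has $\mu(\boz)\ls t^{-1}\|f\|_{\lo}$, so it remains to estimate $T_\ast(b)\le\sum_j T_\ast(b_j)$ on $\cx\setminus\boz$. Applying the three-case analysis to each $b_j$: the ``disjoint'' contributions, after integrating in $x$ over $\cx\setminus 2B_j$ and using \eqref{eq1.6}, sum to a function $S$ with $\|S\|_{\lo}\ls\sum_j\|b_j\|_{\lo}\ls\|f\|_{\lo}$; the ``boundary'' contributions satisfy, by \eqref{eq1.5} and since the indices in this case obey $d(x,x_j)\approx\ez(x)$ and hence $2B_j\subset B(x,C\ez(x))$,
\[
\sum_{j:\,d(x,x_j)\approx\ez(x)}\frac{\|b_j\|_{\lo}}{\lz(x,d(x,x_j))}\ls\frac{t}{\lz(x,\ez(x))}\sum_{j:\,d(x,x_j)\approx\ez(x)}\mu(2B_j)\ls\frac{t\,\lz(x,C\ez(x))}{\lz(x,\ez(x))}\ls t,
\]
where the penultimate step uses the bounded overlap and the last the doubling of $\lz$. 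Thus $T_\ast(b)\ls S+t$ on $\cx\setminus\boz$; Chebyshev's inequality then controls $T_\ast(b)$, and combining with the bounds for $\boz$ and $T_\ast(g)$ and rescaling $t$ proves $\mathrm{(ii)}$.

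For $\mathrm{(ii)}\Rightarrow\mathrm{(iii)}$, I would first deduce from $\mathrm{(ii)}$ that $T_\ast$ maps $L^\fz_b(\mu)$ into $\rbmo$ with $\|T_\ast(f)\|_{\rbmo}\ls\|f\|_{\lin}$. Given $f\in L^\fz_b(\mu)$ and a ball $B$ of radius $r_B$, I would split $f=f\chi_{6B}+f\chi_{\cx\setminus 6B}$: for the local piece, Kolmogorov's inequality and $\mathrm{(ii)}$ give $\mu(6B)^{-1}\int_B|T_\ast(f\chi_{6B})|^\dz\,d\mu\ls\|f\|_{\lin}^\dz$ for each $\dz\in(0,1)$; for the far piece, the oscillation of $T_\ast(f\chi_{\cx\setminus 6B})$ over $B$ — and, similarly, the differences of its averages along a chain of balls, which control the coefficients $K_{B,S}$ in the definition of $\rbmo$ — are shown to be $\ls\|f\|_{\lin}$ uniformly in $\ez$, using that for $\ez\le 4r_B$ the truncation does not reach $\cx\setminus 6B$ so \eqref{eq1.6} applies directly, whereas for $\ez>4r_B$ one first discards the thin shell on which the truncations at two points of $B$ disagree (estimated via \eqref{eq1.5} and the doubling of $\lz$) and then applies \eqref{eq1.6}. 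Working with the $\dz$-version of the $\rbmo$ norm and then invoking the John--Nirenberg inequality for $\rbmo$ (see \cite{hyy}; see also \cite{yyh13}) yields the stated bound. Finally, interpolating the weak type $(1,1)$ estimate $\mathrm{(ii)}$ with this $\lin$-to-$\rbmo$ estimate, through the interpolation machinery available in this setting (see \cite{hyy}; see also \cite{yyh13}), gives the boundedness of $T_\ast$ on $\lp$ for every $p\in(1,\fz)$ on the dense subspace $L^\fz_b(\mu)$, and hence on all of $\lp$, which is $\mathrm{(iii)}$.

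As indicated, I expect the crux to be the ``boundary'' range $d(x,x_j)\approx\ez$ of the truncation parameter: it is exactly where $T_\ast$ is harder than the individual $T_\ez$, and where one cannot appeal to kernel smoothness (only \eqref{eq1.6} being assumed) and must instead exploit the measure-geometric structure — the doubling of $\lz$ and the bounded overlap of the Calder\'on--Zygmund balls. All the other ingredients — the Calder\'on--Zygmund decomposition, the John--Nirenberg inequality for $\rbmo$, and the interpolation between $\lo\to\wlo$ and $\lin\to\rbmo$ — are available in \cite{hyy} and \cite{yyh13}.
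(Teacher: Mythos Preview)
Your overall strategy is right, but two of the ingredients you take for granted are precisely the places where the non-doubling setting bites, and the paper has to work there.

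For $\mathrm{(i)}\Rightarrow\mathrm{(ii)}$, the Calder\'on--Zygmund decomposition you invoke --- bad pieces $b_j$ supported in $B_j$, with vanishing integral and $\|b_j\|_{\lo}\ls t\,\mu(B_j)$ --- does not exist here. If it did, the good function would equal $f$ minus its mean on each $B_j$, and the mean of $|f|$ over a stopping ball can be arbitrarily larger than $t$ when $\mu$ is non-doubling. The decomposition actually available (Lemma~\ref{lem2.2}) produces $h_j=\oz_jf-\varphi_j$ with $\varphi_j$ supported on a possibly much larger \emph{doubling} ball $S_j\supset B_j$; thus $h_j$ lives on $S_j$, not on $B_j$. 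This breaks your boundary argument: when $\partial B(x,\ez(x))$ meets $S_j$ you have no control on $r_{S_j}$ relative to $\ez(x)$, the family $\{S_j\}$ has no bounded overlap, and $\|h_j\|_{\lo}\ls t\,\mu(B_j)$ fails. The paper instead splits according to whether $x\in 6S_j\setminus 6^2B_j$ or $x\notin 6S_j$: in the first region it applies the hypothesis --- the $L^{p_0}(\mu)$ boundedness of $T_\ast$ --- to each $\varphi_j$ individually; in the second region it runs your three-case analysis but controls the boundary residue by $M_{(5)}\bigl(\sum_j|h_j|\bigr)$ and the weak $(1,1)$ bound for $M_{(5)}$, rather than by a pointwise bound $\ls t$.

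For $\mathrm{(ii)}\Rightarrow\mathrm{(iii)}$, your $L^\infty\!\to\!\rbmo$ estimate is essentially the paper's bound $\|M_r^\sharp(T_\ast f)\|_{\lin}\ls\|f\|_{\lin}$, and the argument you sketch for it is the same. The delicate point is the interpolation. A black-box interpolation between weak $(1,1)$ and $L^\infty\!\to\!\rbmo$ for merely sublinear operators is not available in the references you cite; the paper stresses that the composite $M_r^\sharp\circ T_\ast$ is not known to be quasi-linear, so Marcinkiewicz cannot be applied to it --- this was an actual gap in earlier work. The paper closes it by proving a pointwise substitute, $M_r^\sharp(T_\ast(f_1+f_2))\ls M_{r,(5)}(T_\ast f_1)+M_r^\sharp(T_\ast f_2)$, splitting $f$ at height $t$, and combining the $L^\infty$ bound on the small part with the weak $(1,1)$ bound for $M_{r,(5)}\circ T_\ast$ on the large part to obtain $\|M_r^\sharp(T_\ast f)\|_{L^{p,\infty}(\mu)}\ls\|f\|_{\lp}$; Lemma~\ref{lem2.1x} then transfers this to $T_\ast$, and only at that final stage --- where $T_\ast$ itself is sublinear --- is Marcinkiewicz invoked.
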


This paper is organized as follows.  In Section \ref{sec2},
we give the proof of  Theorem \ref{thm1.2}.
As an application of Theorem \ref{thm1.2}, in Section \ref{sec3},
we prove Theorem \ref{thm1.1}.

Indeed, to show Theorem \ref{thm1.1}, we first establish a
Cotlar type inequality in Theorem \ref{thm3.1} below
and then, using this inequality, show that the maximal
Calder\'on-Zygmund operator $T_\ast$ in Theorem \ref{thm1.1}
is bounded from $L^1(\mu)$ into $L^{1,\,\infty}(\mu)$.
Furthermore, applying Theorem \ref{thm1.2},
we then obtain the boundedness of $\lp$ with $p\in(1,\,\fz)$ for $T_\ast$
and hence complete the proof of Theorem \ref{thm1.1}.

We point out that, in the proofs of Theorems \ref{thm1.1}
and \ref{thm1.2}, we do borrow some ideas from the proofs
of \cite[Theorems 1.1 and 1.2]{hmy06} which are for
the case that $\cx=\rd$ and $\mu$ being a Radon
measure as in \eqref{eq1.2}. However, there exists
a gap in the proof of \cite[Theorem 1.2]{hmy06},
which is sealed in the proof of Theorem \ref{thm1.2} below. Precisely, in the proof
that the weak type $(1,\,1)$ estimate implies the $L^p(\mu)$ boundedness,
with $p\in(1,\,\infty)$, of the maximal Calder\'on-Zygmund operator
$T_\ast$ in \cite[Theorem 1.2]{hmy06} (see also (ii) $\Longrightarrow$ (iii)
in the proof of Theorem \ref{thm1.2} below), the sharp
maximal function estimate of the maximal Calder\'on-Zygmund operator $T_\ast$ was used.
Based on this estimate and the Marcinkiewicz interpolation theorem,
the $L^p(\mu)$ boundedness, with $p\in(1,\,\infty)$,
of the operator, $M^\sharp_r\circ T_\ast$, compounded by the sharp maximal
function $M^\sharp_r$ (see \eqref{2.1x} below for the definition)
and the maximal Calder\'on-Zygmund
operator $T_\ast$ was then concluded in the proof of \cite[Theorem 1.2]{hmy06}.
However, it is not clear whether the operator $M^\sharp_r\circ T_\ast$
is quasi-linear and hence applying the Marcinkiewicz interpolation theorem
directly to the operator $M^\sharp_r\circ T_\ast$ might be problematic.
To avoid this, in the below proof of (ii) $\Longrightarrow$ (iii) of Theorem \ref{thm1.2},
we borrow some new ideas from the proof of \cite[Lemma 3]{hlw}.
Without aid of the quasi-linear property of this compound operator,
we show the operator $M^\sharp_r\circ T_\ast$ is bounded from $L^p(\mu)$
into $L^{p,\,\infty}(\mu)$ with $p\in(1,\,\infty)$,
which implies that the maximal Calder\'on-Zygmund operator $T_\ast$ is also bounded from
$L^p(\mu)$ into $L^{p,\,\infty}(\mu)$ with $p\in(1,\,\infty)$. Notice that
the maximal Calder\'on-Zygmund operator $T_\ast$ is sublinear. Then, by the Marcinkiewicz interpolation theorem,
we conclude that the maximal Calder\'on-Zygmund operator $T_\ast$ is bounded on  $L^p(\mu)$
with $p\in(1,\,\infty)$, which is the desired conclusion (iii) of Theorem \ref{thm1.2}.

We finally make some conventions on notation.
Throughout this paper, we always denote by $C$, $\wz C$, $c$
and $\wz c$ \emph{positive constants} which
are independent of the main parameters, but
they may vary from line to
line. Positive constants with subscripts,
such as $C_1$, do not change in different
occurrences. Furthermore, we use $C_\az$ to denote
a \emph{positive constant depending on the parameter $\az$}.
The \emph{symbol $Y\ls Z$} means that
there exists a positive constant $C$ such that $Y\le CZ$.
The \emph{symbol $A\sim B$} means that $A\ls B\ls A$.
For any ball $B\subset{\mathcal X}$, we denote its \emph{center} and \emph{radius},
respectively, by $x_B$ and $r_B$
and, moreover, for any $\rho\in (0,\,\fz)$, the \emph{ball} $B(x_B,\,\rho r_B)$ by $\rho B$.
Given any $q\in(1,\,\fz)$, let $q':= q/(q-1)$ denote its
\emph{conjugate index}. Also, for any subset $E\subset{\mathcal X}$,
$\chi_E$ denotes its \emph{characteristic function}.

\section{Proof of Theorem \ref{thm1.2}}\label{sec2}

\hskip\parindent This section is devoted to the proof of Theorem
\ref{thm1.2}. To this end, we first recall some useful notions.

The measure as in \eqref{eq1.3}
is not necessary to satisfy the doubling condition \eqref{eq1.1}.
However, there still exist a lot of doubling balls in the present context.
Given $\az$, $\bz\in(1,\,\fz)$, a ball $B\subset\cx$ is called
\emph{$(\az,\,\bz)$-doubling} if $\mu(\az B)\le\bz\mu(B)$.
It was proved in \cite[Lemmas 3.2 and 3.3]{h10} that if a metric
measure space $(\cx,\,d,\,\mu)$ is upper doubling and $\bz>C_\lz^{\log_2\az}=:\az^\nu$,
then for every ball $B\subset\cx$, there exists some $j\in\zz_+:=\nn\cup\{0\}$
such that $\az^j B$ is $(\az,\,\bz)$-doubling
and, on the other hand, if $(\cx,\,d)$ is geometrically doubling
and $\bz>\az^n$ with $n:= \log_2 N_0$, then for $\mu$-almost every
$x\in\cx$, there exist $(\az,\,\bz)$-doubling balls with arbitrarily small radiuses
of the form $B(x,\,\az^{-j}r)$ for some $j\in\nn$ and any preassigned $r\in(0,\,\fz)$.
Throughout this paper, for any $\az\in (1,\,\fz)$ and ball $B$, $\wz B^\az$ denotes
the \emph{smallest $(\az,\,\bz_\az)$-doubling ball of the form $\az^j B$
with $j\in\zz_+$}, where $\bz_\az>\max\lf\{\az^n,\,\az^\nu\r\}$.
If $\az=6$, we denote the ball $\wz B^\az$ simply by $\wz B$.

For all balls $B\subset S\subset\cx$, define
$$\dz(B,\,S):=\int_{(2S)\setminus B}
\frac1{\lz(x_B, d(x,\,x_B))}\,d\mu(x).$$
The coefficient $\dz(B,\,S)$ was introduced in \cite{h10}, which is analogous to the quantity
$K_{Q,\,R}$ introduced by Tolsa \cite{t01} (see also \cite{t01b,t4}).
For $\delta(B,\,S)$, we have the following useful
properties (see, for example, \cite[Lemmas 5.1 and 5.2]{h10} and \cite[Lemma 2.2]{hyy}).

\begin{lem}\label{lem2.1}
\begin{itemize}
\item [\rm(i)] For all balls $B\subset R\subset S$, $\dz(B,\,R)\le \dz(B,\,S)$.
\vspace{-0.3cm}
\item [\rm(ii)] For all $\rho\in[1,\,\fz)$, there exists a positive
constant $C_\rho$, depending on $\rho$, such that, for all balls
$B\subset S$ with $r_S\le \rho r_B$, $\dz(B,\,S)\le C_\rho$.
\vspace{-0.3cm}
\item [\rm(iii)] For all $\az\in(1,\,\fz)$,
there exists a positive constant $C_\az$, depending on $\az$,
such that, for all balls $B$, $\dz(B,\,\wz B^\az)\le C_\az$.
\vspace{-0.3cm}
\item [\rm(iv)] There exists a positive constant $c$ such that, for
all balls $B\subset R\subset S$, $\dz(B,\,S)\le \dz(B,\,R)+ c\dz(R,\,S)$.
In particular, if $B$ and $R$ are concentric, then $c=1$.
\vspace{-0.3cm}
\item [\rm(v)] There exists a positive constant $\wz c$ such that, for all
balls $B\subset R\subset S$, $\dz(R,\,S)\le\wz c[1+\dz(B,\,S)]$; moreover,
if $B$ and $R$ are concentric, then $\dz(R,\,S)\le \dz(B,\,S)$.
\end{itemize}
\end{lem}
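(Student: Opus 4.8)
The plan is to obtain each of (i)--(v) straight from the definition $\dz(B,\,S)=\int_{(2S)\setminus B}[\lz(x_B,\,d(x,\,x_B))]^{-1}\,d\mu(x)$, using only: that $r\mapsto\lz(x,\,r)$ is non-decreasing; the doubling estimate $\lz(x,\,r)\le C_\lz\lz(x,\,r/2)$ of \eqref{eq1.3} (so that $\lz(x,\,tr)\le C_\lz^{\,\lceil\log_2t\rceil}\lz(x,\,r)$ for $t\ge1$); the comparability \eqref{eq1.4}; the pointwise bound $\lz(x,\,r)\ge\mu(B(x,\,r))$; and---for (iii) only---the defining property that $\wz B^\az=\az^{j_0}B$ is the \emph{smallest} $(\az,\,\bz_\az)$-doubling ball among $\{\az^jB\}_{j\in\zz_+}$, together with $\bz_\az>\az^\nu=C_\lz^{\log_2\az}$. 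One technical point I would flag at the outset: for $B\subset R\subset S$ I shall freely use the nesting $2R\subset2S$ of dilated balls; this is harmless for the balls at hand, and in any case one may pass to the equivalent version of $\dz$ with a larger dilation factor, which changes only the constants. Granting this, (i) is immediate, since $(2R)\setminus B\subset(2S)\setminus B$ and the integrand is non-negative.

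For (ii), $r_S\le\rho r_B$ and $x_B\in S$ give $2S\subset c_\rho B$ with $c_\rho\sim\rho$, hence $\dz(B,\,S)\le\int_{(c_\rho B)\setminus B}[\lz(x_B,\,d(x,\,x_B))]^{-1}\,d\mu(x)$. Decompose $(c_\rho B)\setminus B$ into the at most $\log_2c_\rho+1$ annuli $(2^kB)\setminus(2^{k-1}B)$; on each of them $\lz(x_B,\,d(x,\,x_B))\ge\lz(x_B,\,2^{k-1}r_B)$, while $\mu((2^kB)\setminus(2^{k-1}B))\le\mu(2^kB)\le\lz(x_B,\,2^kr_B)\le C_\lz\lz(x_B,\,2^{k-1}r_B)$, so each annulus contributes at most $C_\lz$, and summing yields the bound $C_\rho$.

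Part (iii) is the substantive step. Write $\wz B^\az=\az^{j_0}B$ with $j_0\in\zz_+$ minimal, and split the region of integration in $\dz(B,\,\wz B^\az)$ as $(2\wz B^\az)\setminus\wz B^\az$ together with $\bigcup_{k=1}^{j_0}[(\az^kB)\setminus(\az^{k-1}B)]$. The first region contributes at most $C_\lz$, exactly as in (ii), since there $\lz(x_B,\,d(x,\,x_B))\ge\lz(x_B,\,\az^{j_0}r_B)$ and $\mu(2\wz B^\az)\le\lz(x_B,\,2\az^{j_0}r_B)\le C_\lz\lz(x_B,\,\az^{j_0}r_B)$. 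On the $k$-th annulus, $\lz(x_B,\,d(x,\,x_B))\ge\lz(x_B,\,\az^{k-1}r_B)$ and $\mu((\az^kB)\setminus(\az^{k-1}B))\le\mu(\az^kB)$, so its contribution is at most $\mu(\az^kB)/\lz(x_B,\,\az^{k-1}r_B)$. Now come the two key inputs. First, minimality of $j_0$ forces $\az^iB$ to be non-$(\az,\,\bz_\az)$-doubling for $0\le i<j_0$, i.e. $\mu(\az^{i+1}B)>\bz_\az\mu(\az^iB)$; iterating down from $j_0$ gives $\mu(\az^kB)\le\bz_\az^{-(j_0-k)}\mu(\wz B^\az)$. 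Second, \eqref{eq1.3} gives $\lz(x_B,\,\az^{j_0}r_B)\le C_\lz^{\,\lceil(j_0-k+1)\log_2\az\rceil}\lz(x_B,\,\az^{k-1}r_B)$, while $\lz(x_B,\,\az^{j_0}r_B)\ge\mu(\wz B^\az)$. Combining these, the $k$-th contribution is at most $C_\az\theta^{\,j_0-k}$ with $\theta:=C_\lz^{\log_2\az}/\bz_\az$; since $\bz_\az>\az^\nu=C_\lz^{\log_2\az}$ we have $\theta<1$, so $\sum_{k=1}^{j_0}\theta^{\,j_0-k}<(1-\theta)^{-1}$, a constant depending only on $\az$. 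This geometric decay, produced by the non-doubling of \emph{every} intermediate ball $\az^iB$ with $i<j_0$, is what controls the a-priori-unbounded length $j_0$ of the chain, and is the only place where $\bz_\az>\az^\nu$ is used. I expect this to be the delicate point.

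Finally, (iv) and (v) rest on comparing $\lz(x_B,\,d(x,\,\cdot))$ with $\lz(x_R,\,d(x,\,\cdot))$ away from $R$, via \eqref{eq1.3} and \eqref{eq1.4}. For (iv), split $(2S)\setminus B=[(2R)\setminus B]\sqcup[(2S)\setminus(2R)]$; the first piece is exactly $\dz(B,\,R)$, and on the second, $x\notin2R$ with $d(x_B,\,x_R)<r_R$ gives $r_R\le d(x,\,x_B)$, $d(x_B,\,x_R)<d(x,\,x_B)$ and $d(x,\,x_R)\le2d(x,\,x_B)$, whence \eqref{eq1.4} followed by \eqref{eq1.3} yields $\lz(x_B,\,d(x,\,x_B))\ge C_\lz^{-2}\lz(x_R,\,d(x,\,x_R))$, so this piece is $\le C_\lz^2\dz(R,\,S)$ and $c=C_\lz^2$ works; if $B$ and $R$ are concentric the two values of $\lz$ coincide, giving $c=1$. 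For (v), split $(2S)\setminus R=[(2R)\setminus R]\sqcup[(2S)\setminus(2R)]$; the first piece is $\dz(R,\,R)\le C_1$ by (ii) with $\rho=1$, and on the second the symmetric comparison (using $d(x,\,x_B)\le2d(x,\,x_R)$ and $d(x_B,\,x_R)<r_R\le d(x,\,x_R)$) gives $\lz(x_B,\,d(x,\,x_B))\le C_\lz^2\lz(x_R,\,d(x,\,x_R))$, so this piece is $\le C_\lz^2\dz(B,\,S)$; altogether $\dz(R,\,S)\le\max\{C_1,\,C_\lz^2\}(1+\dz(B,\,S))$. In the concentric case $(2S)\setminus R\subset(2S)\setminus B$ together with equality of the two dominating functions gives $\dz(R,\,S)\le\dz(B,\,S)$ outright.
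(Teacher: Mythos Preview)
The paper does not prove Lemma \ref{lem2.1} at all; it simply records the statement and cites \cite[Lemmas 5.1 and 5.2]{h10} and \cite[Lemma 2.2]{hyy}. Your argument is the standard one and is correct. A couple of small remarks: the technical point you flag about the nesting $2R\subset2S$ is a genuine subtlety in general metric spaces (where a ball-as-set does not determine its radius), but it affects only (i). Your proofs of (iv) and (v) do not actually require it---the splitting $(2S)\setminus B\subset[(2R)\setminus B]\cup[(2S)\setminus(2R)]$ holds as a one-sided inclusion regardless of whether $2R\subset2S$, and that inclusion is all you use (so replace your $\sqcup$ by $\cup$ in (iv)). In the paper's applications of (i) the balls are always concentric or have increasing radii, where the issue does not arise. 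Your handling of (iii), exploiting the non-doubling of every intermediate $\az^iB$ to force geometric decay with ratio $C_\lz^{\log_2\az}/\bz_\az<1$, is exactly the mechanism behind the result in the cited references.
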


To prove Theorem \ref{thm1.1}, we now recall the
following Calder\'on-Zygmund decomposition
from \cite[Theorem 6.3]{ad}. Let $\gz_0$ be a fixed positive constant
satisfying that $\gz_0>\max\{C_\lz^{3\log_26}, 6^{3n}\}$, where
$C_\lz$ is as in \eqref{eq1.3} and $n$ is
as in Remark \ref{rem1.2}{\rm(ii)}.

\begin{lem}\label{lem2.2}
Let $p\in[1,\,\fz)$, $f\in L^p(\mu)$ and $t\in(0,\,\fz)$
($t>\gz_0^{1/p}\|f\|_\lp/[\mu(\cx)]^{1/p}$ when $\mu(\cx)<\fz$). Then
\begin{itemize}
\vspace{-0.3cm}
\item[\rm(i)] there exists a family of finite overlapping balls, $\{6B_i\}_i$, such
that $\{B_i\}_i$ is pairwise disjoint,
\begin{equation}\label{eq2.2}
\frac{1}{\mu\lf(6^2B_i\r)}\int_{B_i}|f(x)|^p\,d\mu(x)>
\frac {t^p}{\gz_0}\
\textrm{for all}\ i,\end{equation}
$$
\frac1{\mu(6^2\eta B_i)}\int_{\eta B_i}|f(x)|^p\,d\mu(x)
\le\frac{t^p}{\gz_0}\ \textrm{for all}\ i\
\textrm{and}\ \eta\in(2,\,\infty),
$$
and
\begin{eqnarray}\label{eq2.4}
 |f(x)|\le t \,\,\textrm{for}\,\,\mu\textrm{-almost every}\ x\in\cx\setminus(\cup_i 6B_i);
\end{eqnarray}

\vspace{-0.3cm}
\item[\rm(ii)] for each $i$, let $S_i$ be the smallest
$(3\times 6^2, C_\lz^{\log_2(3\times6^2)+1})$-doubling ball of
the family $\{(3\times 6^2)^k B_i\}_{k\in\nn}$, and
$\omega_i:=\chi_{6B_i}/(\sum_k \chi_{6B_k})$.
Then there exists a family $\{\vz_i\}_i$ of
functions such that for each $i$, $\supp(\vz_i)\subset S_i$,
$\vz_i$ has a constant sign on $S_i$,
$$
\dint_\cx \vz_i(x)\,d\mu(x)=\dint_{6B_i} f(x)\omega_i(x)\,d\mu(x)
$$
and
\begin{equation}\label{eq2.6}
\sum_{i}|\vz_i(x)|\le \gz t \,\,\textrm{for}\,\,\mu\textrm{-almost every\,\,} x\in\cx,
\end{equation}
where $\gz$ is a positive constant depending only on
$(\cx, \mu)$ and there exists a positive constant $C$, independent of
$f$, $t$ and $i$, such that, if $p=1$,
\begin{equation}\label{eq2.7}
\|\vz_i\|_\lin\mu(S_i)\le C\dint_{\cx}|f(x)\omega_i(x)|\,d\mu(x)
\end{equation}
and, if $p\in (1,\,\fz)$,
$$
\lf\{\dint_{S_i}|\vz_i(x)|^p\,d\mu(x)\r\}^{1/p}
[\mu(S_i)]^{1/p'}\le \frac{C}{t^{p-1}}\dint_{\cx}|f(x)\omega_i(x)|^p\,d\mu(x).
$$
\end{itemize}
\end{lem}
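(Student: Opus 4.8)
The plan is to obtain the family $\{B_i\}_i$ of part (i) by a Whitney-type covering of a suitable bad set, and then to build the functions $\vz_i$ of part (ii) by an iterative averaging (mass-redistribution) scheme in the spirit of Tolsa. For part (i), fix $t$ as in the statement and let $E_t$ be the set of those $x\in\cx$ for which some ball $B(x,r)$ is \emph{admissible}, i.e.\ $\frac1{\mu(6^2B(x,r))}\int_{B(x,r)}|f|^p\,d\mu>t^p/\gz_0$. Since $f\in\lp$ (and, when $\mu(\cx)<\fz$, since the hypothesis $t>\gz_0^{1/p}\|f\|_\lp/[\mu(\cx)]^{1/p}$ rules out $B=\cx$), a routine argument shows that the radii of admissible balls through a fixed $x$ are bounded above by a constant depending only on $\|f\|_\lp$, $t$ and the space. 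For each $x\in E_t$ pick an admissible radius $r_x>\frac12\sup\{r:\ B(x,r)\text{ admissible}\}$ and apply the $5r$-covering lemma (with dilation factor $6$) to $\{B(x,r_x)\}_{x\in E_t}$ to extract a countable pairwise disjoint subfamily $\{B_i\}_i$, $B_i:=B(x_i,r_i)$, with $E_t\subset\cup_i6B_i$. Then \eqref{eq2.2} is the admissibility of $B_i$; the companion bound for $\eta B_i$ with $\eta\in(2,\fz)$ holds since $\eta r_i>2r_i>\sup\{r:\ B(x_i,r)\text{ admissible}\}$; the finite overlap of $\{6B_i\}_i$ follows, as is standard in this framework, from the pairwise disjointness of $\{B_i\}_i$, the near-maximality of the $r_i$, and Remark \ref{rem1.2}(iii); and \eqref{eq2.4} holds because, for $\mu$-a.e.\ $x$ with $|f(x)|>t$, we have $x\in E_t$: using the existence of $(6^2,\bz)$-doubling balls $B(x,6^{-2j}r)$ of arbitrarily small radius for $\mu$-a.e.\ $x$ (recalled just before Lemma \ref{lem2.1}), the Lebesgue differentiation theorem, and the choice of $\gz_0$ (larger than such a $\bz$), the average $\frac1{\mu(6^2B(x,6^{-2j}r))}\int_{B(x,6^{-2j}r)}|f|^p\,d\mu$ tends, as $j\to\fz$, to a fixed positive multiple of $|f(x)|^p$, which exceeds $t^p/\gz_0$.

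For part (ii), set $\oz_i:=\chi_{6B_i}/(\sum_k\chi_{6B_k})$, a partition of unity on $\cup_i6B_i$ with $\sum_k\chi_{6B_k}$ bounded by the overlap constant, and let $S_i$ be the smallest $(3\times6^2,C_\lz^{\log_2(3\times6^2)+1})$-doubling ball among $\{(3\times6^2)^kB_i\}_{k\in\nn}$, which exists by \cite[Lemma 3.2]{h10} because $C_\lz^{\log_2(3\times6^2)+1}>(3\times6^2)^\nu$. Since $6^3B_i\subset(3\times6^2)^2B_i$, the doubling of $S_i$ gives $\mu(6^3B_i)\le C_\lz^{\log_2(3\times6^2)+1}\mu(S_i)$, so, combining the $\eta=6$ instance of the companion bound from part (i) with H\"older's inequality (when $p>1$), the mean $\big|\int f\oz_i\,d\mu\big|/\mu(S_i)$ is at most a constant multiple of $t$. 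The naive choice $\vz_i:=\frac1{\mu(S_i)}\big(\int f\oz_i\,d\mu\big)\chi_{S_i}$ has the correct mass $\int\vz_i\,d\mu=\int f\oz_i\,d\mu$, a constant sign, support in $S_i$ and $\|\vz_i\|_\lin\ls t$, but a fixed point may lie in $S_j$ for infinitely many $j$, so $\sum_i|\vz_i|$ need not be bounded. To remedy this one writes $\vz_i=\sum_{k\ge1}\vz_i^{(k)}$, where $\vz_i^{(1)}$ is the naive choice and each $\vz_i^{(k+1)}$ is obtained by truncating the current partial sum $\sum_j\sum_{m\le k}\vz_j^{(m)}$ at the level $\sim t$ and re-expanding the removed excess, pushing the part supported on each $S_i$ onto the next doubling ball in the chain $\{(3\times6^2)^\ell B_i\}_\ell$; the geometrically doubling property bounds how many such chains meet a given point, while Lemma \ref{lem2.1}(ii)--(iii), applied to the $S_i$ and their dilates, forces $\|\vz_i^{(k)}\|$ to decay geometrically in $k$. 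Consequently the series converges, the mass identity, the constant sign and the inclusion $\supp\vz_i\subset S_i$ persist at every stage, and one reaches $\sum_i|\vz_i|\le\gz t$ for some $\gz=\gz(\cx,\mu)$, which is \eqref{eq2.6}; tracking $\int|f\oz_i|^p\,d\mu$ through the same iteration and using once more the doubling of $S_i$ gives \eqref{eq2.7} and its $L^p$ analogue.

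The main obstacle is precisely the construction of the $\vz_i$ in part (ii): one must enforce simultaneously $\supp\vz_i\subset S_i$ (a doubling ball, which is what makes the later weak type $(1,1)$ estimates go through) and the pointwise bound \eqref{eq2.6}, and these two requirements pull against each other, the mean-value choice securing the first but possibly violating the second. Reconciling them is exactly the job of the iterative mass-redistribution scheme, whose convergence rests on the summability of the $\dz$-coefficients supplied by Lemma \ref{lem2.1}; by contrast, the covering lemma, the overlap count, and the Lebesgue differentiation step of part (i) are routine in the present setting.
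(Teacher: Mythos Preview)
The paper does not give its own proof of Lemma \ref{lem2.2}; the result is quoted from \cite[Theorem 6.3]{ad} (which in turn adapts Tolsa \cite{t01}), so there is no argument in the paper to compare against. Your outline of part (i) is the standard Whitney-type covering argument used in those references and is correct in substance.

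Your proposal for part (ii), however, has a genuine gap. You describe an iterative correction scheme in which the excess of $\sum_j\sum_{m\le k}\vz_j^{(m)}$ above level $\sim t$ is ``push[ed] \ldots onto the next doubling ball in the chain $\{(3\times6^2)^\ell B_i\}_\ell$'', and you then assert that ``the inclusion $\supp\vz_i\subset S_i$ persist[s] at every stage''. These two claims are incompatible: $S_i$ is by definition the \emph{first} doubling ball in that chain, so the ``next'' one is strictly larger, and any mass pushed there lies outside $S_i$. Your appeal to Lemma \ref{lem2.1}(ii)--(iii) to force geometric decay of $\|\vz_i^{(k)}\|$ is also unsubstantiated: those items bound the coefficients $\dz(B,S)$ and say nothing about $L^\fz$ norms of correction terms in such a scheme.

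The construction actually used in \cite{ad,t01} is not an iterative correction but an induction over the index $i$ after ordering the $S_i$ by nondecreasing radius. One sets $\vz_i$ equal to a suitable constant times $\chi_{A_i}$ for a subset $A_i\subset S_i$, where $A_i$ is (roughly) $S_i$ minus the region already saturated by the previously built $\vz_j$. The key packing fact is that whenever $S_j\cap S_i\neq\emptyset$ with $r_{S_j}\le r_{S_i}$ one has $S_j\subset 3S_i$, and then the $(3\times6^2,\,C_\lz^{\log_2(3\times6^2)+1})$-doubling of $S_i$ together with the disjointness of $\{B_j\}_j$ and \eqref{eq2.2} bounds $\sum_{j:\,S_j\cap S_i\neq\emptyset,\ r_{S_j}\le r_{S_i}}\mu(A_j)$ by a fixed fraction of $\mu(S_i)$. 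This leaves enough room in $S_i$ to place $A_i$ with the required mass while keeping $\sum_j|\vz_j|\ls t$. No enlargement of supports occurs, and Lemma \ref{lem2.1} is not invoked.
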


We now recall some maximal functions in \cite{h10, ad} as follows.
The \emph{non-centered doubling maximal function} $N$ is defined by setting,
for all $f\in L^1_\loc(\mu)$ and $x\in\cx$,
\begin{eqnarray*}
N(f)(x):=\sup_{\gfz{B\ni x}{B\,(6,\,\bz_6)-{\rm doubling}}}\frac{1}{\mu(B)}\int_B|f(y)|\,d\mu(y).
\end{eqnarray*}
By the Lebesgue differential theorem, we see that, for $\mu$-almost every $x\in\cx$,
\begin{equation}\label{eq2.9x}
|f(x)|\le N(f)(x);
\end{equation}
see \cite[Corollary 3.6]{h10}. Let $\eta\in(1,\,\infty)$.
The \emph{Hardy-Littlewood maximal function}
and the \emph{sharp maximal function} are, respectively, defined by
setting, for all $f\in L^1_\loc(\mu)$ and $x\in\cx$,
\begin{equation*}
\begin{array}[t]{cl}
$$M_{(\eta)}(f)(x):=\dsup_{B\ni x}\frac{1}{\mu(\eta B)}
\int_B|f(y)|\,d\mu(y)$$
\end{array}
\end{equation*}
and
\begin{eqnarray*}
M^\sharp(f)(x):=&\dsup_{B\ni x}\frac{1}{\mu(5B)}
\int_B|f(y)-m_{\wz B}(f)|\,d\mu(y)
+\dsup_{\gfz{x\in B\subset S}{B,\,S\,(6,\,\bz_6)-{\rm doubling}}}
\frac{|m_B(f)-m_S(f)|}{1+\dz(B,\,S)},\nonumber
\end{eqnarray*}
where $m_B(f)$ denotes the \emph{mean of $f$ over $B$}, that is,
$m_B(f):= \frac1{\mu(B)}\int_Bf(x)\,d\mu(x)$.
Moreover, for all $r\in(0,\,\infty)$, the \emph{operators} $M_{r,\,(\eta)}$ and $M^\sharp_r$
are defined by setting,  for all $f\in L_\loc^r(\mu)$ and $x\in\cx$,
$$M_{r,\,(\eta)}(f)(x):=\sup_{B\ni x}\lf\{\frac{1}{\mu(\eta B)}
\int_B|f(y)|^r\,d\mu(y)\r\}^{\frac{1}{r}}$$
and
\begin{eqnarray}\label{2.1x}
M_r^\sharp(f)(x):=\{M^\sharp(|f|^r)(x)\}^{\frac{1}{r}},
\end{eqnarray}
respectively.
By \cite[Proposition 3.5]{h10} and \cite[Lemma 2.3]{hlyy},
we see that for each $\eta\in[5,\,\infty)$, $M_{(\eta)}$ is bounded
from $L^p(\mu)$ into itself with $p\in(1, \fz)$ and from $L^1(\mu)$ into $L^{1,\,\infty}(\mu)$.

\begin{lem}[{\cite[Lemma 3.3]{lyy}}]\label{lem2.1x}
For all $f\in L_\loc^1(\mu)$, with $\int_\cx f(x)\,d\mu(x)=0$
when $\mu(\cx)<\fz$, if $\min\{1,N(f)\}\in L^{p_0}(\mu)$ for some $p_0\in(1,\,\infty)$,
then for all $p\in[p_0,\,\infty)$, there exists a positive constant $C_p$,
depending on $p$ but independent of $f$, such that
$$
\|N(f)\|_{L^{p,\,\infty}(\mu)}\le C_p\|M^\sharp(f)\|_{L^{p,\,\infty}(\mu)}.
$$
\end{lem}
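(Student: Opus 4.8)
The plan is to derive the weak-type estimate from a good-$\lz$ inequality comparing $N(f)$ with $M^\sharp(f)$, followed by an absorption argument; this is the weak-type analogue of the Fefferman--Stein type estimate $\|N(f)\|_\lp\ls\|M^\sharp(f)\|_\lp$ proved by Tolsa for non-doubling measures, now carried out over upper and geometrically doubling metric measure spaces. We may assume $\|M^\sharp(f)\|_{\wlp}<\fz$. The hypotheses $\min\{1,N(f)\}\in L^{p_0}(\mu)$ with $p\ge p_0$ (and $\int_\cx f\,d\mu=0$ when $\mu(\cx)<\fz$) are used only through the resulting finiteness $\mu(\{x\in\cx:\ N(f)(x)>t\})<\fz$ for every $t\in(0,\fz)$; this, together with a truncated version of the iteration below, first yields $\|N(f)\|_{\wlp}<\fz$, which is what makes the absorption legitimate.

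The main step is the good-$\lz$ inequality: there exist constants $a>1$, $c_0\ge1$ and $C>0$, depending only on $(\cx,d,\mu)$, such that for all $t\in(0,\fz)$ and all small $\ez\in(0,1)$,
$$\mu\big(\{x\in\cx:\ N(f)(x)>at,\ M^\sharp(f)(x)\le\ez t\}\big)\le C\ez\,\mu\big(\{x\in\cx:\ N(f)(x)>t/c_0\}\big).$$
Since $N(f)$ is lower semicontinuous, $\Omega_t:=\{x\in\cx:\ N(f)(x)>t\}$ is open; the inequality is trivial when $\mu(\Omega_t)=\fz$, and the case $\mu(\cx)<\fz$ with $\Omega_t=\cx$ is handled via the normalization $\int_\cx f\,d\mu=0$. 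Otherwise, take a Whitney-type covering $\{B_j\}_j$ of $\Omega_t$ with bounded overlap and $r_{B_j}\sim\dist(B_j,\cx\setminus\Omega_t)$, and for each $j$ fix $y_j\in\cx\setminus\Omega_t$ with $d(y_j,B_j)\ls r_{B_j}$, so that $N(f)(y_j)\le t$. For $x\in B_j$ with $M^\sharp(f)(x)\le\ez t$, one controls the averages $m_B(|f|)$ entering $N(f)(x)$: a $(6,\bz_6)$-doubling ball $B\ni x$ that contains $y_j$ gives $m_B(|f|)\le N(f)(y_j)\le t$ at once, while one with $r_B\ls r_{B_j}$ lies inside a doubling dilate $B_j^\ast$ of $B_j$ with $B_j^\ast\supset B_j\cup\{y_j\}$, and then, by the $(6,\bz_6)$-doubling of $B$,
$$m_B(|f|)\le\bz_6\,M_{(5)}\big((f-m_{B_j^\ast}(f))\chi_{B_j^\ast}\big)(x)+|m_{B_j^\ast}(f)|,$$
where $|m_{B_j^\ast}(f)|\le m_{B_j^\ast}(|f|)\le N(f)(y_j)\le t$; the coefficient estimates of Lemma \ref{lem2.1} are what permit these comparisons across non-comparable scales. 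Choosing $a$ large, it follows that
$$B_j\cap\{N(f)>at\}\cap\{M^\sharp(f)\le\ez t\}\subseteq\big\{x\in B_j:\ M_{(5)}\big((f-m_{B_j^\ast}(f))\chi_{B_j^\ast}\big)(x)>c_1at\big\}$$
for some $c_1>0$, and any $x$ in the latter set satisfies $\frac1{\mu(5B_j^\ast)}\int_{B_j^\ast}|f-m_{B_j^\ast}(f)|\,d\mu\le M^\sharp(f)(x)\le\ez t$ (using that $B_j^\ast$ is doubling); hence, by the weak type $(1,1)$ boundedness of $M_{(5)}$ recalled above, $\mu(B_j\cap\{N(f)>at\}\cap\{M^\sharp(f)\le\ez t\})\ls\frac\ez a\,\mu(5B_j^\ast)$. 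Summing over $j$ and controlling $\sum_j\mu(5B_j^\ast)$ by $C\mu(\Omega_{t/c_0})$ via the bounded overlap of $\{B_j\}_j$ and Lemma \ref{lem2.1}, the good-$\lz$ inequality follows.

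Granting it, for each $t\in(0,\fz)$ we split $\{N(f)>at\}$ according to whether $M^\sharp(f)\le\ez t$ and apply the good-$\lz$ inequality and Chebyshev's inequality for $M^\sharp(f)$ to get
$$(at)^p\mu\big(\{N(f)>at\}\big)\le C\ez\,(ac_0)^p\,(t/c_0)^p\,\mu\big(\{N(f)>t/c_0\}\big)+a^p\ez^{-p}\|M^\sharp(f)\|_{\wlp}^p.$$
Since $\|N(f)\|_{\wlp}<\fz$ was secured above, taking the supremum over $t\in(0,\fz)$ and using $\sup_{t>0}(at)^p\mu(\{N(f)>at\})=\|N(f)\|_{\wlp}^p$ and $(t/c_0)^p\mu(\{N(f)>t/c_0\})\le\|N(f)\|_{\wlp}^p$ yields $\|N(f)\|_{\wlp}^p\le C\ez\,(ac_0)^p\|N(f)\|_{\wlp}^p+a^p\ez^{-p}\|M^\sharp(f)\|_{\wlp}^p$; choosing $\ez$ with $C\ez(ac_0)^p\le\frac12$ and absorbing gives $\|N(f)\|_{\wlp}\le C_p\|M^\sharp(f)\|_{\wlp}$, as desired. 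I expect the good-$\lz$ inequality to be the main obstacle — in particular the non-doubling bookkeeping inside it: taming the arbitrarily large balls occurring in $N(f)$ by means of the exterior points $y_j$, and controlling $\sum_j\mu(5B_j^\ast)$ in the absence of \eqref{eq1.1}, which forces one to pass systematically to doubling dilates and to absorb the errors into the coefficients $\dz(B,S)$ via Lemma \ref{lem2.1}. The final absorption is then routine.
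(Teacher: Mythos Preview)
The paper does not supply a proof of Lemma~\ref{lem2.1x}; it is quoted from \cite[Lemma~3.3]{lyy} without argument, so there is no in-paper proof to compare against. Your good-$\lz$/absorption strategy is the standard route to Fefferman--Stein weak-type inequalities in the non-doubling setting and is essentially what \cite{lyy} carries out, adapting Tolsa's argument for $(\rd,|\cdot|,\mu)$ with $\mu$ as in \eqref{eq1.2} to the upper and geometrically doubling framework.

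Your outline is sound, and you have correctly located the one genuinely delicate point: summing $\mu(5B_j^\ast)$ when the doubling dilates $B_j^\ast$ need not be comparable in size to the Whitney balls $B_j$. As written, your summation step is not justified: bounded overlap of $\{B_j\}_j$ controls $\sum_j\mu(B_j)$, not $\sum_j\mu(5B_j^\ast)$, and since each $B_j^\ast$ contains the exterior point $y_j\notin\Omega_t$ it cannot lie inside any level set $\Omega_{t/c_0}$. The resolution in the literature is to apply the weak $(1,1)$ bound for $M_{(5)}$ to $(f-m_{B_j^\ast}(f))\chi_{cB_j}$ for a \emph{fixed} dilate $cB_j$ (not $\chi_{B_j^\ast}$), so that the resulting $L^1$ norm is an integral over a ball comparable to $B_j$; the passage from $m_{\widetilde{cB_j}}(f)$ to $m_{B_j^\ast}(f)$ is then a chain of mean comparisons bounded via the second supremum in the definition of $M^\sharp$ together with Lemma~\ref{lem2.1}(iii), and contributes only a harmless constant multiple of $\ez t$. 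With that adjustment the summation goes through by bounded overlap of the $cB_j$ inside $\Omega_t$, and your absorption step is routine.
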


Based on the above lemmas, we now turn to the proof of Theorem \ref{thm1.2}.
\begin{proof}[Proof of Theorem \ref{thm1.2}]
We first show that (i) $\Longrightarrow$ (ii). To this end,
assume that $T_\ast$ is bounded on $L^{p_0}(\mu)$ for some $p_0\in(1,\,\infty)$.
Our goal is to show that there exists a positive constant $C$ such that, for all $f\in\lo$ and $t\in(0,\,\fz)$,
\begin{equation}\label{eq2.12}
\mu(\{x\in\cx:\ T_\ast(f)(x)>t\})\le\frac{C}{t}\int_\cx|f(x)|\,d\mu(x).
\end{equation}
Observe that if $\mu(\cx)<\fz$ and $t\le{\gz_0\|f\|_\lo}/{\mu(\cx)}$,
then the inequality \eqref{eq2.12} is trivial. Therefore, we may assume that
$t>{\gz_0\|f\|_\lo}/{\mu(\cx)}$ when $\mu(\cx)<\fz$.
For each fixed $t\in(0,\,\fz)$ and $f\in\lo$, applying Lemma
\ref{lem2.2} with $p=1$ to $f$ at level $t$ with the notation same as in Lemma \ref{lem2.2},
we obtain $f=g+h$, where
$$g:= f\chi_{\cx\setminus(\cup_j 6B_j)}
+\sum_j\varphi_j$$
and
$$h:= f-g=\sum_j[\oz_jf-\varphi_j]=:\sum_j h_j.$$
By \eqref{eq2.4}, \eqref{eq2.6} and \eqref{eq2.7}
in Lemma \ref{lem2.2}, it is easy to see that, for $\mu$-almost every $x\in\cx$,
\begin{equation}\label{eq2.14x}
|g(x)|\le|f(x)\chi_{\cx \setminus(\cup_j6B_j)}(x)|+\lf|\sum_j
\varphi_j(x)\r|\ls t
\end{equation}
and
\begin{eqnarray}\label{eq2.15x}
\|g\|_\lo&&\le\int_\cx|f(x)|\,d\mu(x)
+\int_\cx\lf|\sum_j\varphi_j(x)\r|\,d\mu(x)\\\nonumber
&&\le\|f\|_\lo+\sum_j\int_\cx|f(x)\oz_j(x)|\,d\mu(x)\ls\|f\|_\lo.
\end{eqnarray}

From $L^{p_0}(\mu)$ boundedness of $T_\ast$,
\eqref{eq2.14x} and \eqref{eq2.15x}, it follows that
\begin{align}\label{eq2.x1}
\mu(\{x\in\cx:\,|T_\ast(g)(x)|>t\})
&\ls\frac 1{t^{p_0}}\int_\cx
|T_\ast(g)(x)|^{p_0}\,d\mu(x)
\ls \frac 1{t^{p_0}}\|g\|_{L^{p_0}(\mu)}^{p_0}\\\n
&\ls\frac{1}{t^{p_0}}t^{p_0-1}\int_\cx|g(x)|\,d\mu(x)
\sim\frac{1}{t}\|g\|_\lo\ls
\frac{1}{t}\|f\|_\lo.
\end{align}

Moreover, by \eqref{eq2.2} with $p=1$, we conclude that
\begin{equation}\label{eq2.x2}
\mu\lf(\bigcup_j6^2B_j\r)\ls
t^{-1}\sum_j\int_{B_j}|f(x)|\,d\mu(x)
\ls t^{-1}\int_\cx|f(x)|\,d\mu(x).
\end{equation}
Hence, by \eqref{eq2.x1} and \eqref{eq2.x2}, together
with $T_\ast f\le T_\ast g+T_\ast h$, we see that the proof of \eqref{eq2.12}
is reduced to proving that
\begin{equation}\label{eq2.13}
\mu\lf(\lf\{x\in\cx\setminus
\lf(\bigcup_j6^2B_j\r):\ |T_\ast(h)(x)|>t\r\}\r)\ls\frac{1}{t}\|f\|_\lo.
\end{equation}

For each fixed $x\in\cx\setminus(\cup_j6^2B_j)$, write
\begin{align}\label{eq2.x3}
T_\ast(h)(x)&=\sup_{\ez\in(0,\,\infty)}\lf|T_\ez\lf(\sum_j h_j\r)(x)\r|\\\n
&\le\sup_{\ez\in(0,\,\infty)}\lf|\sum_j T_\ez(h_j)(x)
\chi_{6S_j\setminus6^2B_j}(x)\r|
+\sup_{\ez\in(0,\,\infty)}\lf|\sum_j T_\ez(h_j)(x)
\chi_{\cx\setminus6S_j}(x)\r|\\\n
&=:{\rm A}_1(x)+{\rm A}_2(x).
\end{align}

We first estimate ${\rm A}_1(x)$. To this end,
for all $x\in\cx\setminus(\cup_j6^2B_j)$, by \eqref{eq1.4} and \eqref{eq1.5},
we further write
\begin{eqnarray}\label{eq2.x4}
{\rm A}_1(x)&\le&\sup_{\ez\in(0,\,\infty)}\lf|\sum_jT_\ez(f\oz_j)(x)
\chi_{6S_j\setminus6^2B_j}(x)\r|\\\n
&\hs&+\sup_{\ez\in(0,\,\infty)}\lf|\sum_jT_\ez(\varphi_j)(x)
\chi_{6S_j\setminus6^2B_j}(x)\r|\\\n
&\le&\sum_j\int_{B_j}\frac{|f(y)|}{\lz(x,\,d(x,\,y))}
\,d\mu(y)\chi_{6S_j\setminus6^2B_j}(x)
+\sum_j T_\ast(\varphi_j)(x)\chi_{6S_j\setminus6^2B_j}(x)\\\n
&\ls&\sum_j\frac{\chi_{6S_j\setminus6^2B_j}(x)}
{\lz(x,\,d(x,\,x_{B_j}))}\int_{B_j}|f(y)|\,d\mu(y)
+\sum_j T_\ast(\varphi_j)(x)\chi_{6S_j}(x)\\\n
&=:&{\rm A}_{1,\,1}(x)+{\rm A}_{1,\,2}(x),
\end{eqnarray}
where the last inequality is justified by the fact that
$d(x,\,y)\sim d(x,x_{B_j})$ for all $x\in6S_j\setminus6^2B_j$ and $y\in B_j$.

We first estimate ${\rm A}_{1,\,1}(x)$.
Denote by $N_{6B_j,\,6S_j}$ the first positive integer $k$ such that
$6^kB_j\supset 6S_j$, and write $N_{6B_j,\,6S_j}$ simply by $N_j$.
Recalling that $S_j$ is the smallest $(3\times 6^2,\,C_\lz^{\log_2(3\times6^2)+1})$-doubling ball of
the family $\{(3\times 6^2)^k B_j\}_{k\in\nn}$,
then from \eqref{eq1.4} and Lemma \ref{lem2.1}(iii),
we deduce that
\begin{eqnarray*}
\int_{6S_j\setminus 6B_j}\frac{1}{\lz(x,\,d(x,\,x_{B_j}))}\,d\mu(x)
&\le&\sum_{k=1}^{N_j-1}\int_{(6^{k+1}B_j)\setminus(6^kB_j)}
\frac{1}{\lz(x,\,d(x,\,x_{B_j}))}\,d\mu(x)\\
&\ls&\sum_{k=1}^{N_j-1}\int_{(6^{k+1}B_j)\setminus(6^kB_j)}
\frac{1}{\lz(x_{B_j},\,d(x,\,x_{B_j}))}\,d\mu(x)\\
&\ls&\delta(6B_j,\,6S_j)+1\ls1,
\end{eqnarray*}
where in the last-to-second inequality, we used the fact that
\begin{eqnarray*}
\int_{6^{N_j}B_j\setminus6^{N_j-1}B_j}
\frac{1}{\lz(x_{B_j},\,d(x,\,x_{B_j}))}\,d\mu(x)&&\le \int_{6^{N_j}B_j\setminus6^{N_j-1}B_j}
\frac{1}{\lz(x_{B_j},\,6^{N_j-1}r_{B_j})}\,d\mu(x)\\
&&\le \frac{\mu(6^{N_j}B_j)}{\lz(x_{B_j},\,6^{N_j-1}r_{B_j})}\ls1.
\end{eqnarray*}
This implies that
$$
\mu\lf(\lf\{x\in\cx\setminus\lf(\bigcup_j6^2B_j\r):\ {\rm A}_{1,\,1}(x)>t\r\}\r)\ls
t^{-1}\sum_j\int_{B_j}|f(y)| \,d\mu(y)\ls t^{-1}\|f\|_\lo.
$$

To estimate ${\rm A}_{1,\,2}(x)$, by the H\"older inequality,
$L^{p_0}(\mu)$ boundedness of $T_{\ast}$, the support condition of $\fai_j$
and the fact that $S_j$ is a $(3\times 6^2,\,C_\lz^{\log_2(3\times6^2)+1})$-doubling ball,
together with \eqref{eq2.7},
we conclude that
\begin{eqnarray*}
\mu\lf(\lf\{x\in\cx\setminus\lf(\bigcup_j6^2B_j\r):
\ {\rm A}_{1,\,2}(x)>t\r\}\r)&\le& t^{-1}\sum_j \|T_\ast\fai_j\|_{L^{p_0}(\mu)}
[\mu(6S_j)] ^\frac{1}{p'_0}\\
&\ls& t^{-1}\sum_j\|\fai_j\|_{L^{p_0}(\mu)}[\mu(6S_j)]^\frac{1}{p'_0}\\
&\ls& t^{-1}\sum_j\|\fai_j\|_{L^\infty(\mu)}\mu(S_j)
\ls t^{-1}\|f\|_\lo,
\end{eqnarray*}
which, along with the estimate for ${\rm A}_{1,\,1}(x)$, implies that
$$
\mu\lf(\lf\{x\in\cx\setminus\lf(\bigcup_j6^2B_j\r):\ {\rm A}_{1}(x)>t\r\}\r)\ls
t^{-1}\sum_j\int_{B_j}|f(x)| \,d\mu(x)\ls t^{-1}\|f\|_\lo.
$$

To estimate ${\rm A}_2(x)$,
we consider the following three cases:

{\rm Case (i)} $\ez\in(0,\,\dist(x,\,S_j))$, where $\dist(x,\,S_j):=\inf_{u\in S_j}d(x,\,u)$.
In this case, for all $x\in\cx\setminus(6S_j)$ and $y\in S_j$,
$d(x,\,y)\ge \dist(x,\,S_j)>\ez$. From this fact, $\supp(h_j)\subset S_j$
and $\int_{S_j}h_j(y)\,d\mu(y)=0$, it follows that,
for all $x\in\cx\setminus6S_j$,
\begin{eqnarray*}
|T_\ez(h_j)(x)|&=&
\lf|\int_{d(x,\,y)>\ez}\lf[K(x,\,y)-K(x,\,x_{B_j})\r]h_j(y)\,d\mu(y)\r|\\
&\le&\int_\cx|K(x,\,y)-K(x,\,x_j)||h_j(y)|\,d\mu(y).
\end{eqnarray*}

{\rm Case (ii)} $\ez\in(\dist(x,\,S_j)+2r_{S_j},\,\infty)$.
In this case, noticing that $\supp(\fai_j)\subset S_j$ and
$\supp(\oz_j)\subset  2B_j\subset S_j$, we see that, for all $x\in\cx\setminus6S_j$ and
$y\in\supp(h_j)$,
$d(x,\,y)\le\dist(x,\,S_j)+2r_{S_j}<\ez$ and hence $T_\ez(h_j)(x)=0$.

{\rm Case (iii)} $\ez\in[\dist(x,\,S_j),\,\dist(x,\,S_j)+2r_{S_j}]$.
In this case, observe that, for all $x\in\cx\setminus6S_j$, $\dist(x,\,S_j)>
r_{S_j}$ and hence $\ez<3\dist(x,\,S_j)$. Therefore, by $\supp(h_j)\subset S_j$,
$\int_{S_j}h_j(y)\,d\mu(y)=0$ and \eqref{eq1.5},
we conclude that,
for all $x\in\cx\setminus6S_j$,
\begin{eqnarray*}
|T_\ez(h_j)(x)|
&=&\lf|\int_{d(x,\,y)>\ez}K(x,\,y)h_j(y)\,d\mu(y)\r|\\
&\le&\lf|\int_{d(x,\,y)>\dist(x,\,S_j)}K(x,\,y)h_j(y)\,d\mu(y)\r|
+\lf|\int_{\dist(x,\,S_j)<d(x,\,y)\le\ez}\dots\r|\\
&\le&\int_\cx|K(x,\,y)-K(x,\,x_{B_j})||h_j(y)|\,d\mu(y)
+\int_{\ez/3<d(x,\,y)\le\ez}|K(x,\,y)h_j(y)|\,d\mu(y)\\
&\ls&\int_\cx|K(x,\,y)-K(x,\,x_{B_j})||h_j(y)|\,d\mu(y)
+\frac{1}{\lz(x,\,2\ez)}\int_{d(x,\,y)<2\ez}|h_j(y)|\,d\mu(y).
\end{eqnarray*}

Combining the estimates in Cases (i), (ii) and (iii), we
see that, for all $\ez\in(0,\,\fz)$ and  $x\in\cx\setminus6S_j$,
$$|T_\ez(h_j)(x)|\ls
\int_\cx|K(x,\,y)-K(x,\,x_{B_j})||h_j(y)|\,d\mu(y) +\frac{1}{\lz(x,\,2\ez)}
\int_{d(x,\,y)<2\ez}|h_j(y)|\,d\mu(y).$$
Since, for all $\ez\in(0,\,\fz)$ and $x\in\cx\setminus6S_j$,
$$\frac{1}{\lz(x,\,2\ez)}
\int_{d(x,\,y)<2\ez}|h_j(y)|\,d\mu(y) \ls
\frac{1}{\mu(B(x,10\ez))}\int_{d(x,\,y)<2\ez}|h_j(y)|\,d\mu(y),$$
then,  for all $x\in\cx\setminus6S_j$,
\begin{eqnarray}\label{eq2.x5}
{\rm A}_2(x)&\ls&\sum_j\int_\cx|K(x,\,y)-K(x,\,x_{B_j})|
|h_j(y)|\,d\mu(y)\chi_{\cx\setminus6S_j}(x)\\
&\hs&+\sup_{\ez\in(0,\,\infty)}
\sum_j\frac{1}{\mu(B(x,\,10\ez))}\int_{d(x,\,y)<2\ez}|h_j(y)|\,d\mu(y)
\chi_{\cx\setminus6S_j}(x)\n\\
&\ls&\sum_j\int_\cx|K(x,\,y)-K(x,\,x_{B_j})||h_j(y)|\,d\mu(y)
\chi_{\cx\setminus6S_j}(x)\n\\
&\hs&+M_{(5)}\lf(\sum_j\lf|h_j\r|\r)(x)
=:{\rm A}_{2,\,1}(x)+{\rm A}_{2,\,2}(x).\n
\end{eqnarray}

From \eqref{eq1.6}, the fact that $\supp h_j\subset S_j$ and Lemma \ref{lem2.2}, we infer that
\begin{eqnarray*}
&&\mu\lf(\lf\{x\in\cx\setminus
\lf(\bigcup_j6^2B_j\r):\, {\rm A}_{2,\,1}(x)>t\r\}\r)\\
&&\hs\le t^{-1}\sum_j\int_{\cx\setminus6S_j}
\int_\cx|K(x,\,y)-K(x,\,x_{B_j})||h_j(y)|\,d\mu(y)\,d\mu(x)\\
&&\hs\ls t^{-1}\sum_j\int_\cx|h_j(y)|\,d\mu(y)\ls
t^{-1}\sum_j\|h_j\|_\lo\\
&&\hs\ls t^{-1}\sum_j\lf[\int_{\cx}|f(y)|w_j(y)\,d\mu(y)
+\|\fai_j\|_{L^\infty(\mu)}\mu(S_j)\r]\ls t^{-1}\|f\|_\lo.
\end{eqnarray*}

Similarly, by the fact that $M_{(5)}$ is bounded
from $L^1(\mu)$ into $L^{1,\,\infty}(\mu)$, together with
Lemma \ref{lem2.2}, we see  that
$$\mu\lf(\lf\{x\in\cx\setminus
\lf(\bigcup_j6^2B_j\r): \,{\rm A}_{2,\,2}(x)>t\r\}\r)\ls t^{-1}\sum_j
\int_\cx|h_j(x)|\,d\mu(x) \ls t^{-1}\|f\|_\lo.$$

Combining the estimates for ${\rm A}_1(x)$ and ${\rm A}_2(x)$, we obtain the desired estimate
\eqref{eq2.13}. Thus, we prove \eqref{eq2.12}, which completes
the proof of (i) $\Longrightarrow$ (ii).

To prove that ${\rm (ii)}\Longrightarrow{\rm (iii)}$, let $r\in(0,\,1)$. We first claim that
there exists a positive constant $C_1$ such that,
for all $f\in L^\infty_b(\mu)$,
\begin{equation}\label{eq2.16}
\lf\|M_r^\sharp(T_\ast f)\r\|_\lin\le C_1\|f\|_\lin.
\end{equation}

To show this, for any ball $B\subset\cx$ and $r\in(0,\,1)$, let
$$h_{B,\,r}:=m_B\lf(\lf[T_\ast\lf(f\chi_{\cx\setminus2B}\r)\r]^r\r).$$
Observe that, for any ball $B\subset\cx$,
\begin{eqnarray*}
&&\frac{1}{\mu(5B)}\int_B|[T_\ast(f)(x)]^r
-m_{\wz B}([T_\ast(f)]^r)|\,d\mu(x)\\
&&\hs\le\frac{1}{\mu(5B)}\int_B|[T_\ast(f)(x)]^r
-h_{B,\,r}|\,d\mu(x)\\
&&\hs\hs+|h_{B,\,r}-h_{\wz B,\,r}|
+\frac{1}{\mu(\wz B)}\int_{\wz B}\lf|[T_\ast(f)(x)]^r -h_{\wz B,\,r}\r|\,d\mu(x)
\end{eqnarray*}
and, for two doubling balls $B\subset S$,
\begin{eqnarray*}
|m_B([T_\ast(f)]^r)-m_S([T_\ast(f)]^r)|&\le&|m_B([T_\ast(f)]^r)-h_{B,\,r}|\\
&&\hs+|h_{B,\,r}-h_{S,\,r}|+|h_{S,\,r}-m_S([T_\ast(f)]^r)|.
\end{eqnarray*}
We claim that to show \eqref{eq2.16}, it suffices to prove that, for all balls $B\subset\cx$,
\begin{equation}\label{eq2.17}
{\rm D}_1:=\frac{1}{\mu(5B)}\int_B\lf|[T_\ast(f)(x)]^r
-h_{B,\,r}\r|\,d\mu(x)\ls\|f\|_\lin^r
\end{equation}
and, for all balls $B\subset S\subset\cx$ with $S$ being the doubling ball,
\begin{equation}\label{eq2.18}
{\rm D}_2:=|h_{B,\,r}-h_{S,\,r}|\ls[1+\dz(B,\,S)]^r\|f\|_\lin^r.
\end{equation}
Indeed, assuming that \eqref{eq2.17} and \eqref{eq2.18}
are true, then from the $(6, \bz_6)$-doubling property
of $\wz B$ for any ball $B$, \eqref{eq2.18} and Lemma \ref{lem2.1}(iii), it follows that
$$\lf|h_{B,\,r}-h_{\wz B,\,r}\r|\ls\lf[1+\dz(B,\,\wz B)\r]^r\|f\|_\lin^r\ls \|f\|_\lin^r$$
and, from \eqref{eq2.17}, that for any $(6, \bz_6)$-doubling balls $B\subset S$,
\begin{eqnarray*}
&&|m_B([T_\ast(f)]^r)-h_{B,\,r}|+|h_{S,\,r}-m_S([T_\ast(f)]^r)|\\
&&\quad\le \dfrac1{\mu(B)}\int_B\lf|[T_\ast(f)(x)]^r-h_{B,\,r}\r|\,d\mu(x)
+\dfrac1{\mu(S)}\int_S\lf|[T_\ast(f)(x)]^r-h_{S,\,r}\r|\,d\mu(x)\\
&&\quad\ls\|f\|_\lin^r,
\end{eqnarray*}
which further implies \eqref{eq2.16}.

We now show \eqref{eq2.17}.
To this end, write
\begin{eqnarray*}
{\rm D}_1&\le&\frac{1}{\mu(5B)}\int_B
|[T_\ast(f)(x)]^r-[T_\ast(f\chi_{\cx\setminus2B})(x)]^r|\,d\mu(x)\\
&&\hs+\frac{1}{\mu(5B)}\int_B
|[T_\ast(f\chi_{\cx\setminus2B})(x)]^r-h_{B,\,r}|d\mu(x)\\
&\le&\frac{1}{\mu(5B)}\int_B
[T_\ast(f\chi_{2B})(x)]^r\,d\mu(x)\\
&&\hs+\frac{1}{\mu(5B)}\frac1{\mu(B)}\int_B\int_B\sup_{\ez\in(0,\,\infty)}
|T_\ez(f\chi_{\cx\setminus2B})(x)-T_\ez(f\chi_{\cx\setminus2B})(y)|^r\,d\mu(y)\,d\mu(x)\\
&=:&{\rm D}_{1,\,1}+{\rm D}_{1,\,2}.
\end{eqnarray*}

The weak type (1,1) estimate of $T_\ast$ and the
Kolmogorov inequality (see, for example, \cite[p.\,91]{g08}) say that
$${\rm D}_{1,\,1}\ls\frac{[\mu(2B)]^{1-r}}{\mu(5B)}
\|f\chi_{2B}\|_\lo^r\ls\|f\|_\lin^r.$$

To estimate ${\rm D}_{1,\,2}$, we first see that, for all
$x,\,y\in B$ and $u\in\cx\setminus2B$, it holds that $d(x,\,u)>r_B$ and $d(y,\,u)>r_B$.
Moreover, for $\ez\in(r_B,\,\infty)$, $x,\,y\in B$ and $u\in\cx\setminus2B$, if
$d(x,\,u)\le\ez$ and $d(y,\,u)>\ez$, then
$d(y,\,u)\le d(y,\,x)+d(x,\,u)<2r_B+\ez\le3\ez$ and, if
$d(x,\,u)>\ez$ and $d(y,\,u)\le\ez$, then
$d(x,\,u)\le d(x,\,y)+d(y,\,u)<2r_B+\ez\le3\ez$.
This, along with \eqref{eq1.5} and \eqref{eq1.6}, implies that, if $\ez\in(r_B,\,\infty)$, then
\begin{eqnarray*}
&&\lf|T_\ez\lf(f\chi_{\cx\setminus2B}\r)(x)
-T_\ez\lf(f\chi_{\cx\setminus2B}\r)(y)\r|\\
&&\hs\le\lf|\int_{\gfz{d(x,\,u)>\ez}{d(y,\,u)>\ez}}[K(x,u)-K(y,\,u)]f(u)
\chi_{\cx\setminus2B}(u)\,d\mu(u)\r|\\
&&\hs\hs+\lf|\int_{\gfz{d(x,\,u)>\ez}{d(y,\,u)\le\ez}}K(x,\,u)f(u)
\chi_{\cx\setminus2B}(u)\,d\mu(u)\r|
+\lf|\int_{\gfz{d(y,\,u)>\ez}{d(x,\,u)\le\ez}}K(y,\,u)f(u)
\chi_{\cx\setminus2B}(u)\,d\mu(u)\r|\\
&&\hs\le\int_{\gfz{d(x,\,u)>4\ez}{d(y,\,u)>\ez}}
|K(x,\,u)-K(y,\,u)||f(u)|\,d\mu(u)+\int_{\gfz{\ez<d(x,\,u)\le4\ez}{d(y,\,u)>\ez}}
|K(x,\,u)||f(u)|\,d\mu(u)\\
&&\hs\hs+\int_{\gfz{\ez<d(x,\,u)\le4\ez}{d(y,\,u)>\ez}}
|K(y,\,u)||f(u)|\,d\mu(u)+\int_{\ez< d(x,\,u)<3\ez}
|K(x,\,u)||f(u)|\,d\mu(u)\\
&&\hs\hs+\int_{\ez< d(y,\,u)<3\ez}
|K(y,\,u)||f(u)|\,d\mu(u)\\
&&\hs\ls\|f\|_\lin+\int_{\ez<d(x,\,u)\le4\ez}|K(x,\,u)||f(u)|\,d\mu(u)
+\int_{\ez<d(y,\,u)<6\ez}|K(y,\,u)||f(u)|\,d\mu(u)\\
&&\hs\ls\|f\|_\lin
\end{eqnarray*}
and, if $\ez\in(0,\,r_B]$, then
\begin{eqnarray*}
&&\lf|T_\ez\lf(f\chi_{\cx\setminus2B}\r)(x)
-T_\ez\lf(f\chi_{\cx\setminus2B}\r)(y)\r|\\
&&\hs\le\lf|\int_{\cx}[K(x,\,u)-K(y,\,u)]f(u)
\chi_{\cx\setminus2B}(u)\,d\mu(u)\r|\\
&&\hs\le\int_{\cx\setminus2B}
|K(x,\,u)-K(y,\,u)||f(u)|\,d\mu(u)\\
&&\hs\le \int_{\gfz{d(x,\,u)>4r_B}{d(y,\,u)>r_B}}|K(x,\,u)-K(y,\,u)||f(u)|\,d\mu(u)
+\int_{\gfz{r_B<d(x,\,u)\le4r_B}{d(y,\, u)>r_B}}\dots\\
&&\hs\ls\|f\|_\lin+\int_{r_B<d(x,\,u)\le4r_B}|K(x,\,u)||f(u)|\,d\mu(u)\\
&&\hs\hs+\int_{r_B<d(y,\,u)<6r_B}|K(y,\,u)||f(u)|\,d\mu(u)\ls\|f\|_\lin.
\end{eqnarray*}
Therefore,
${\rm D}_{1,\,2}\ls\|f\|_\lin^r.$

Combining the estimates for ${\rm D}_{1,\,1}$ and ${\rm D}_{1,\,2}$, we obtain the desired estimate
\eqref{eq2.17}.

We now turn to the proof of \eqref{eq2.18}.
To this end, denote by $N_{B,\,S}$ the smallest integer $k$ with $k\ge 2$
such that $2S\subset(\frac3{2})^kB$ and $N_{B,\,S}$ simply by
$N$. Clearly,
\begin{align*}
|h_{B,\,r}-h_{S,\,r}|&=\lf|m_B\lf(\lf[T_\ast\lf(f\chi_{\cx\setminus2B}\r)\r]^r\r)
-m_S\lf(\lf[T_\ast\lf(f\chi_{\cx\setminus2S}\r)\r]^r\r)\r|\\
&\le\lf|m_B\lf(\lf[T_\ast\lf(f\chi_{(\frac3{2})^{N}B\setminus2B}\r)\r]^r\r)\r|
+\lf|m_S\lf(\lf[T_\ast\lf(f\chi_{(\frac3{2})^{N}B\setminus2S}\r)\r]^r\r)\r|\\
&\hs\hs+\lf|m_B\lf(\lf[T_\ast\lf(f\chi_{\cx\setminus(\frac3{2})^{N}B}\r)\r]^r\r)
-m_S\lf(\lf[T_\ast\lf(f\chi_{\cx\setminus(\frac3{2})^{N}B}\r)\r]^r\r)\r|\\
&=:{\rm D}_{2,\,1}+{\rm D}_{2,\,2}+{\rm D}_{2,\,3}.
\end{align*}

From \eqref{eq1.5}, \eqref{eq1.3} and \eqref{eq1.4}, we deduce that, for all $\ez\in(0,\fz)$ and $y\in B$,
\begin{eqnarray*}
&&\lf|T_\ez\lf(f\chi_{(\frac3{2})^{N}B\setminus2B}\r)(y)\r|\\
&&\hs\ls\sum_{k=2}^{N-1}\int_{(\frac3{2})^{k+1}B\setminus(\frac3{2})^kB}
\frac{|f(u)|}{\lz(y,\,d(y,\,u))}\,d\mu(u)+\int_{\frac9{4}B\setminus2B}
\frac{|f(u)|}{\lz(y,\,d(y,\,u))}\,d\mu(u)\\
&&\hs\ls\sum_{k=2}^{N-1}\int_{(\frac3{2})^{k+1}B\setminus(\frac3{2})^kB}
\frac{|f(u)|}{\lz(x_B,\,d(x_B,\,u))}\,d\mu(u)+\int_{\frac9{4}B\setminus2B}
\frac{|f(u)|}{\lz(x_B,\,d(x_B,\,u))}\,d\mu(u)\\
&&\hs\ls\|f\|_\lin[1+\dz(B,\,S)],
\end{eqnarray*}
where we used the fact that
$$\int_{\frac9{4}B\setminus2B}
\frac{|f(u)|}{\lz(x_B,\,d(x_B,\,u))}\,d\mu(u)\ls \int_{\frac9{4}B\setminus2B}
\frac1{\lz(x_B,\,2r_B)}\,d\mu(u)\|f\|_\lin\ls\|f\|_\lin.$$
This implies that
${\rm D}_{2,\,1}\ls[1+\dz(B,\,S)]^r\|f\|_\lin^r.$

By the weak type (1, 1) of $T_\ast$, the Kolmogorov inequality, the fact that $(\frac3{2})^{N}B\subset6S$
and the $(6, \bz_6)$-doubling property of $S$, we see that
$${\rm D}_{2,\,2}\ls\frac1{[\mu(S)]^r}
\lf\|f\chi_{(\frac3{2})^{N}B\setminus2S}\r\|_\lo^r\ls\|f\|_\lin^r.$$

From the trivial inequality, $|a|^r-|b|^r\le|a-b|^r$ for all $a,\,b\in\cc$ and
$r\in(0,1)$, and some argument similar to that for ${\rm D}_{1,\,2}$,
we infer that
${\rm D}_{2,\,3}\ls\|f\|_\lin^r.$

Combining the estimates for ${\rm D}_{2,\,i}$ with
$i\in\{1,2,3\}$, we obtain \eqref{eq2.18}.

We now conclude the proof of (ii) $\Longrightarrow$ (iii) by
considering the following two cases for
$\mu(\cx)$.

Case (i)\ $\mu(\cx)=\fz$. We first claim that, for all $r\in(0,\,1)$,
$p\in[1,\,\infty)$ and $f\in L^\infty_b(\mu)$,
\begin{equation}\label{weak-bound}
\|M^\sharp_r(T_\ast f)\|_{L^{p,\,\infty}(\mu)}\ls\|f\|_{L^p(\mu)}.
\end{equation}

Indeed, it is not clear whether the operator $M^\sharp_r\circ T_\ast$ is quasi-linear.
However, we still see that there exists a positive constant $C_2$ such that,
for all $f_1,\,f_2\in L^\infty_b(\mu)$ and $x\in\cx$,
\begin{equation}\label{control}
M^\sharp_r(T_\ast(f_1+f_2))(x)
\le C_2[M_{r,\,(5)}(T_\ast f_1)(x)+M^\sharp_r(T_\ast f_2)(x)].
\end{equation}
Indeed, by $r\in(0, 1)$, we see that for all $f_1,\,f_2\in L^\infty_b(\mu)$, $x\in\cx$ and $B\ni x$,
\begin{eqnarray*}
&&\dfrac1{\mu(5B)}\dint_B\lf||T_\ast(f_1+f_2)(y)|^r-m_{\wz B}(|T_\ast(f_1+f_2)|^r)\r|\,d\mu(y)\\
&&\quad\le \dfrac1{\mu(5B)}\dint_B\lf||T_\ast(f_2)(y)|^r-m_{\wz B}(|T_\ast(f_2)|^r)\r|\,d\mu(y)+m_{\wz B}(|T_\ast(f_1)|^r)\\
&&\quad\quad+ \dfrac1{\mu(5B)}\dint_B|T_\ast(f_1)(y)|^r\,d\mu(y)
\ls \lf[M^\sharp_r(T_\ast f_2)(x)\r]^r+\lf[M_{r,\,(5)}(T_\ast f_1)(x)\r]^r
\end{eqnarray*}
and, for any $(6, \bz_6)$-doubling balls $B\subset S$ with $B\ni x$,
\begin{eqnarray*}
&&\lf|m_B(|T_\ast(f_1+f_2)|^r)-m_S(|T_\ast(f_1+f_2)|^r)\r|\\
&&\quad\le \lf|m_B(|T_\ast(f_2)|^r)-m_S(|T_\ast(f_2)|^r)\r|+m_B(|T_\ast(f_1)|^r)+m_S(|T_\ast(f_1)|^r)\\
&&\quad\ls [1+\dz(B, S)]\lf[M^\sharp_r(T_\ast f_2)(x)\r]^r+\lf[M_{r,\,(5)}(T_\ast f_1)(x)\r]^r.
\end{eqnarray*}
Combining these two estimates yields \eqref{control}.

By the boundedness, from $L^1(\mu)$
into $L^{1,\,\infty}(\mu)$, of $T_\ast$ and $M_{(5)}$, we conclude that, for all $r\in(0,\,1)$,
$M_{r,\,(5)}\circ T_\ast$ is also bounded from $L^1(\mu)$
into $L^{1,\,\infty}(\mu)$; see \cite[Lemma 3.2]{ntv2} for the details.
For all $f\in L^\infty_b(\mu)$ and $t\in(0,\,\infty)$, we split
$f$ into $f_1$ and $f_2$ with $f_1:=f\chi_{\{y\in\cx:\ |f(y)|>t\}}$
and $f_2:=f\chi_{\{y\in\cx:\ |f(y)|\le t\}}$.
It is easy to see that
$$\|f_1\|_{L^1(\mu)}\le t^{1-p}\|f\|^p_{L^p(\mu)}\
\mbox{and}\
\|f_2\|_{L^\infty(\mu)}\le t.$$
By this fact, \eqref{control}, \eqref{eq2.16}, the boundedness of  $M_{r,\,(5)}\circ T_\ast$
from $L^1(\mu)$ into $L^{1,\,\infty}(\mu)$ with $r\in(0,\,1)$, we deduce that, for all $t\in(0,\,\infty)$,
\begin{eqnarray*}
\mu(\{x\in\cx:\ M^\sharp_r(T_\ast f)(x)>C_2(1+C_1)t\})
&\le&\mu(\{x\in\cx:\ M_{r,\,(5)}(T_\ast f_1)(x)>t\})\\
&\ls& t^{-1}\|f_1\|_{L^1(\mu)}\ls t^{-p}\|f\|^p_{L^p(\mu)},
\end{eqnarray*}
which implies the desired result \eqref{weak-bound}.

Now we show that,  for all $f\in L^\infty_b(\mu)$, $\min\{1,\ N_r(T_\ast f)\}\in\lp$, where
$N_r(g):=[N(|g|^r)]^{1/r}$ for all $g\in L_{\loc}^r(\mu)$.
Indeed,  for all $f\in L^\infty_b(\mu)$, we see that $f\in\lo$.
Moreover, by the definitions of $N_r$ and $M_{r,(5)}$ with $r\in(0,1)$,
we easily conclude that, for $\mu$-almost every $x\in\cx$,
$N_r(T_\ast f)(x)\ls M_{r,\,(5)}(T_\ast f)(x)$.
From the assumption of Theorem \ref{thm1.2}(ii) and the fact that
$M_{r,(5)}$ is bounded from $\wlo$ into $\wlo$ (see \cite[Lemma 2.3]{hlyy}), we deduce that
$$\|N_r(T_\ast f)\|_\wlo\ls\|M_{r,\,(5)}(T_\ast f)\|_\wlo\ls
\|T_\ast f\|_\wlo\ls\|f\|_\lo<\infty,$$
which implies that $N_r(T_\ast f)$ lies in $\wlo$ for all $r\in(0,\,1)$.
Therefore,  for all $p\in(1,\,\fz)$,
\begin{align*}
&\int_\cx\lf[\min\{1,\ N_r(T_\ast f)\}(x)\r]^p\,d\mu(x)\\
&\hs=p\int_0^2t^{p-1}
\mu\lf(\{x\in\cx:\ \min\{1,\ N_r(T_\ast f)\}(x)>t\}\r)\,dt+p\int_2^\fz\dots\\
&\hs\ls p\int_0^2t^{p-1}\mu\lf(\{x\in\cx:\ \min\{1,\ N_r(T_\ast f)\}(x)>t\}\r)\,dt\\
&\hs\ls\|N_r(T_\ast f)\|_\wlo\int_0^2t^{p-2}\,dt
\ls\|N_r(T_\ast f)\|_\wlo.
\end{align*}
Thus, $\min\{1,\ N_r(T_\ast f)\}\in\lp$.

From this, \eqref{eq2.9x}, Lemma \ref{lem2.1x} and \eqref{weak-bound},
it follows that, for all $p\in(1,\,\fz)$,
\begin{eqnarray*}
\|T_\ast(f)\|_{L^{p,\,\infty}(\mu)}
&&\le\|N_r(T_\ast f)\|_{L^{p,\,\infty}(\mu)}
=\|N(|T_\ast f|^r)\|^{\frac1r}_{L^{\frac pr,\,\infty}(\mu)}\\
&&\ls \|M^\sharp(|T_\ast f|^r)\|^{\frac1r}_{L^{\frac pr,\,\infty}(\mu)}
\sim \|M^\sharp_r(T_\ast f)\|_{L^{p,\,\infty}(\mu)}\ls\|f\|_{L^p(\mu)},
\end{eqnarray*}
which, along with the Marcinkiewicz interpolation theorem, implies that, for all $p\in(1,\,\fz)$,
$$\|T_\ast f\|_\lp\ls\|f\|_\lp.$$
Therefore, (iii) holds in this case.

Case (ii) $\mu(\cx)<\fz$.
In this case, for all $r\in(0,\,1)$, $f\in L^\infty_b(\mu)$ and $x\in\cx$, we see that
\begin{align*}
|T_\ast(f)(x)|&\le\lf[N\lf(|T_\ast(f)|^r\r)
(x)\r]^\frac{1}{r}\\
&\ls\lf\{N\lf[|T_\ast(f)|^r-\frac{1}{\mu(\cx)}
\int_\cx|T_\ast(f)|^r\,d\mu\r](x)\r\}^\frac{1}{r}\\
&\hs+\lf\{\frac{1}{\mu(\cx)}
\int_\cx|T_\ast(f)(x)|^r\,d\mu(x)\r\}^\frac{1}{r}
=:{\rm U}(x)+\lf\{\frac{1}{\mu(\cx)}
\int_\cx|T_\ast(f)(x)|^r\,d\mu(x)\r\}^\frac{1}{r}.
\end{align*}

The same argument as in the case that $\mu(\cx)=\fz$
gives us the desired estimate for ${\rm U}(x)$.

Recall that $T_\ast$ is bounded from $\lo$ into $\wlo$.
From this, it follows that, for all $r\in(0,\,1)$,
\begin{align}\label{eq2.20}
&\int_\cx|T_\ast(f)(x)|^r\,d\mu(x)\\
&\hs=r\int_0^{\|f\|_\lo[\mu(\cx)]^{-1}}
t^{r-1}\mu\lf(\{x\in\cx:\ |T_\ast(f)(x)|>t\}\r)\,dt+r\int_{\|f\|_\lo[\mu(\cx)]^{-1}}^\fz\dots\n\\
&\hs\ls\int_0^{\|f\|_\lo[\mu(\cx)]^{-1}}
t^{r-1}\mu(\cx)\,dt+\int_{\|f\|_\lo[\mu(\cx)]^{-1}}^\fz
t^{r-2}\|f\|_\lo\,dt\n\\
&\hs\ls[\mu(\cx)]^{1-r}\|f\|_\lo^r\ls\|f\|_\lp^r.\n
\end{align}

Combining the estimate for ${\rm U}(x)$ and \eqref{eq2.20},
we see that $T_\ast$ is bounded on $\lp$ for all
$p\in(1,\,\fz)$, which completes the
proof of (ii) $\Longrightarrow$ (iii).

The proof of (iii) $\Longrightarrow$ (i) is obvious. This finishes the proof of Theorem \ref{thm1.2}.
\end{proof}

\section{Proof of Theorem \ref{thm1.1}}\label{sec3}

\hskip\parindent
The purpose of this section is to prove Theorem \ref{thm1.1}. To this end,
we first establish a Cotlar type inequality. Indeed, such an inequality
was first obtained by Grafakos \cite{gr} in the classical Euclidean space $\rd$
with the $d$-dimensional Lebesgue measure.
Later, this Coltar type inequality was generalized to
$(\rd,\,|\cdot|,\,\mu)$ with the measure $\mu$ satisfying \eqref{eq1.2} in \cite[Theorem 3.1]{hmy06}.
We point out that  Bui and Doung in \cite{ad} obtained another Coltar type inequality
for the Calder\'on-Zygmund operator with the kernel
satisfying \eqref{eq1.5} and \eqref{eq1.11} on non-homogeneous
metric measure spaces. However, their Coltar type inequality is not valid
under the present assumptions, since
the regularity condition \eqref{eq1.11} of the kernel $K$
is stronger than \eqref{eq1.6}. Therefore, we establish a Coltar
type inequality different from theirs as follows.

\begin{thm}\label{thm3.1}
Let $K$ be a $\mu$-locally integrable function mapping from
$(\cx\times\cx)\setminus\triangle$ to $\cc$,
which satisfies \eqref{eq1.5} and \eqref{eq1.6}, and
$T$ and $T_\ast$ as in \eqref{eq1.7} and \eqref{eq1.9}, respectively.
Suppose that $T$ is bounded on $\lt$. Then there exists a positive constant $C_3$
such that, for all $f\in L^2(\mu)\cap L^\infty(\mu)$ and $\mu$-almost every $x\in\supp\cx$,
\begin{equation}\label{coltar}
T_\ast(f)(x)\le C_3\lf[M_{(5)}(Tf)(x)+\|f\|_\lin\r].
\end{equation}
\end{thm}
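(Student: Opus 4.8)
The plan is to fix $\ez\in(0,\,\fz)$, a point $x$ in the support of $\mu$, and to compare $T_\ez(f)(x)$ with the average of $Tf$ over a suitable doubling ball centered near $x$. First I would choose a $(6,\,\bz_6)$-doubling ball $B$ containing $x$ with radius comparable to $\ez$; such balls exist with arbitrarily small radii of the prescribed form since $\cx$ is geometrically doubling, so I may take $B=B(x,\,6^{-k}\rho)$ doubling with $6^{-k}\rho\sim\ez$ (say within a fixed factor). Split $f=f_1+f_2$ with $f_1:=f\chi_{2B}$ and $f_2:=f\chi_{\cx\setminus 2B}$. For the local part, note $T_\ez(f_1)(x)$ is controlled in absolute value by $\int_{2B}|K(x,\,y)||f_1(y)|\,d\mu(y)$, and by the size condition \eqref{eq1.5} together with \eqref{eq1.4} this is $\ls \|f\|_\lin\,\dz(B,\,2B)\ls\|f\|_\lin$ by Lemma \ref{lem2.1}(ii) (radii comparable). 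Actually one must be a touch careful: $T_\ez$ only integrates over $d(x,\,y)>\ez$, so $T_\ez(f_1)(x)=\int_{2B,\,d(x,y)>\ez}K(x,\,y)f(y)\,d\mu(y)$, still bounded by the same quantity. So the contribution of $f_1$ to $T_\ez(f)(x)$ is $\ls\|f\|_\lin$.

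Next, for the far part I want to show that $T_\ez(f_2)(x)$ is close to $Tf_2(z)$ for every $z\in B$, with an error bounded by $\|f\|_\lin$. For $z\in B$ and $y\in\cx\setminus 2B$ we have $d(x,\,y)\sim d(z,\,y)\gtrsim r_B\sim\ez$, and the Hörmander condition \eqref{eq1.6} gives $\int_{\cx\setminus 2B}|K(z,\,y)-K(x,\,y)||f_2(y)|\,d\mu(y)\ls\|f\|_\lin$; moreover the region where $d(x,\,y)>\ez$ but $d(z,\,y)\le\ez$ (or vice versa) sits inside an annulus of the form $\ez<d(x,\,y)\lesssim\ez$, whose contribution is again $\ls\|f\|_\lin$ by \eqref{eq1.5} and \eqref{eq1.4}. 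Hence $|T_\ez(f_2)(x)-Tf_2(z)|\ls\|f\|_\lin$ uniformly in $z\in B$. Then write $Tf_2(z)=Tf(z)-Tf_1(z)$, average over $z\in B$, and use the Kolmogorov-type weak $(1,1)$ bound: since $T$ is bounded on $\lt$ it is bounded from $\lo$ into $\wlo$ (standard Calderón–Zygmund argument in this setting, or one may simply invoke the $L^2$ bound and Chebyshev for a weak-type estimate), so $\frac{1}{\mu(B)}\int_B|Tf_1(z)|^{1/2}\,d\mu(z)\ls[\mu(B)]^{-1/2}\|f_1\|_\lo^{1/2}\ls\|f\|_\lin^{1/2}$ because $\|f_1\|_\lo\le\|f\|_\lin\mu(2B)\ls\|f\|_\lin\mu(B)$ by doubling; a similar averaging handles the main term, giving $\frac{1}{\mu(B)}\int_B|Tf(z)|\,d\mu(z)\le M_{(5)}(Tf)(x)$ (up to enlarging the ball in the definition, which is harmless).

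Putting the pieces together: for $\mu$-almost every $z\in B$,
\[
|T_\ez(f)(x)|\le|T_\ez(f_1)(x)|+|T_\ez(f_2)(x)-Tf_2(z)|+|Tf(z)|+|Tf_1(z)|\ls\|f\|_\lin+|Tf(z)|+|Tf_1(z)|,
\]
and then I would average the right-hand side over $z\in B$ (using the weak-$(1,1)$/Kolmogorov estimate for the $Tf_1$ term and $m_B$ for the $Tf$ term), obtaining $|T_\ez(f)(x)|\ls M_{(5)}(Tf)(x)+\|f\|_\lin$ with a constant independent of $\ez$. Taking the supremum over $\ez$ yields \eqref{coltar}. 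The main obstacle, and the place requiring the most care, is the handling of the ``transition annuli'' $\{\ez<d(x,\,y)\le c\ez\}$ versus $\{\ez<d(z,\,y)\le c\ez\}$ when comparing $T_\ez(f_2)(x)$ with the non-truncated $Tf_2(z)$: one has to verify, using only the size condition \eqref{eq1.5} and the doubling property \eqref{eq1.4} of $\lz$, that the measure-to-$\lz$ ratio over such thin annuli is bounded, so that these error terms are genuinely $O(\|f\|_\lin)$ and not merely $O(\|f\|_\lin\log(\cdots))$; this is exactly the calculation done for the analogous terms ${\rm D}_{1,\,2}$ and in Case (iii) of the estimate of ${\rm A}_2$ in the proof of Theorem \ref{thm1.2}, so the estimate $\mu(B(x,\,c\ez))\ls\lz(x,\,\ez)$ and the comparability $d(x,\,y)\sim d(z,\,y)$ for $y$ far from $B$ do the job. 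A secondary subtlety is ensuring a doubling ball of radius comparable to the given $\ez$ actually exists, which is why one works with balls of the restricted form $B(x,\,6^{-j}r)$ and allows the comparability constant between $r_B$ and $\ez$ to be as large as a fixed power of $6$.
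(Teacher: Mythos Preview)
Your overall plan is the paper's: pick a $(6,\bz_6)$-doubling ball $B_x$ centered at $x$ with radius of the form $6^{-k}\ez$, split $f$ along a small multiple of $B_x$, compare $Tf_2(x)$ with $Tf_2(u)$ for $u\in B_x$ via the H\"ormander condition \eqref{eq1.6}, and control the averages of $|Tf(u)|$ and $|Tf_1(u)|$ over $B_x$ by $M_{(5)}(Tf)(x)$ and $\|f\|_{\lin}$ using the doubling of $B_x$ together with the $L^2$-bound for $T$. There is, however, a genuine gap in your execution.

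You assume the doubling ball can be taken with $r_B\sim\ez$ ``within a fixed factor''; this is not justified. The existence results (see the paragraph preceding Lemma~\ref{lem2.1}) only guarantee that for $\mu$-a.e.\ $x$ there is \emph{some} $k\in\nn$ with $B(x,6^{-k}\ez)$ doubling, with no bound on $k$. The paper therefore takes $B_x$ to be the \emph{largest} doubling ball among $\{B(x,6^{-k}\ez)\}_{k\in\nn}$, say of radius $6^{-k_0}\ez$, with $k_0$ possibly arbitrarily large. With this choice $T_\ez(f_1)(x)=0$ automatically (since $3B_x\subset B(x,\ez/2)$), but the remaining tail ${\rm I}_2:=\big|\int_{d(x,y)\le\ez}K(x,y)f_2(y)\,d\mu(y)\big|$ --- which in your write-up is hidden inside the comparison of $T_\ez(f_2)(x)$ with $Tf_2(z)$ --- is an integral over the annulus $\{3r_{B_x}\le d(x,y)\le\ez\}$, spanning roughly $k_0$ six-adic scales, not a thin shell. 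The bound ${\rm I}_2\ls\|f\|_{\lin}$ requires an idea you are missing: by the \emph{maximality} of $B_x$, each intermediate ball $6^jB_x$ for $1\le j\le k_0-1$ is \emph{not} $(6,\bz_6)$-doubling, so $\mu(6^jB_x)\le\bz_6^{-(k_0-j)}\mu(B(x,\ez))\le\bz_6^{-(k_0-j)}\lz(x,\ez)$, while $\lz(x,6^{j-1}r_{B_x})\ge 6^{-\nu(k_0-j+1)}\lz(x,\ez)$; since $\bz_6>6^\nu$, the annular contributions form a convergent geometric series. This is precisely the computation from \cite[p.\,28]{ad} that the paper invokes. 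Your final paragraph recognizes the difficulty but misdiagnoses it: the comparability constant between $r_{B_x}$ and $\ez$ is \emph{not} bounded by a fixed power of $6$, and the thin-annulus estimates you point to (${\rm D}_{1,2}$ in the proof of Theorem~\ref{thm1.2}) are a different situation and do not apply here.
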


\begin{proof}
We show this theorem by borrowing some ideas from \cite[Theorem 6.6]{ad}.
For all $\ez\in(0,\,\infty)$ and $x\in\cx$, let $B_x$ be the biggest
$(6,\bz_6)$-doubling ball with center $x$ of the form $6^{-k}\ez$ with $k\in\nn$.
Let $B_x:=B(x,6^{-k_0}\ez)$.
Split $f=f_1+f_2$, where, $f_1:=f\chi_{3B_x}$ and
$f_2:=f\chi_{\cx\setminus3B_x}$.
Notice that
$$\{y\in\cx:\ d(x,y)>\ez\}\cap3B(x,6^{-k_0}\ez)=\emptyset$$
and hence, for each $\ez\in(0, \fz)$, $T_\ez(f_1)(x)=0$.

Now we estimate $T_\ez(f_2)(x)$.
To this end, we write that, for all $\ez\in(0,\,\infty)$ and $x\in\cx$,
$$|T_\ez(f_2)(x)|\le|T(f_2)(x)|
+\lf|\int_{d(x,\,y)\le\ez} K(x,\,y)f_2(y)\,d\mu(y)\r|=:{\rm I}_1+{\rm I}_2.$$

For the term ${\rm I}_1$, we further write that, for all $u\in B_x$,
$$|T(f_2)(x)|\le|T(f_2)(x)-T(f_2)(u)|+|T(f)(u)|+|T(f_1)(u)|.$$
Applying the H\"{o}rmander condition \eqref{eq1.6},
we conclude that, for all $x\in\cx$ and $u\in B_x$,
$$|T(f_2)(x)-T(f_2)(u)|\le\int_{\cx\setminus3B_x}
|K(x,\,y)-K(u,\,y)||f(y)|\,d\mu(y)\ls\|f\|_\lin.$$
This implies that, for all $u\in B_x$,
\begin{equation}\label{eq3.1}
|T(f_2)(x)|\ls\|f\|_\lin+|T(f_1)(u)|+|T(f)(u)|.
\end{equation}
Integrating the inequality \eqref{eq3.1} with respect to the
variable $u$ over the ball $B_x$, from the H\"older inequality and
the boundedness of $T$ on $L^2(\mu)$, we deduce that, for all $x\in\cx$,
\begin{eqnarray*}
|T(f_2)(x)|&\ls&\|f\|_\lin+\frac{1}{\mu(B_x)}\int_{B_x}|T(f_1)(u)|\,d\mu(u)
+\frac{1}{\mu(B_x)}\int_{B_x}|T(f)(u)|\,d\mu(u)\\
&\ls&\|f\|_\lin+\frac{1}{\mu(B_x)}\|T(f_1)\|_{L^2(\mu)}[\mu(B_x)]^{1/2}
+\frac{1}{\mu(5B_x)}\int_{B_x}|T(f)(u)|\,d\mu(u)\\
&\ls&\|f\|_\lin+\frac{1}{\mu(B_x)}\|T(f_1)\|_{L^2(\mu)}[\mu(B_x)]^{1/2}
+M_{(5)}(Tf)(x)\\
&\ls&\|f\|_\lin+M_{(5)}(T(f))(x).
\end{eqnarray*}

To estimate the term ${\rm I}_2$, by an argument similar to that used for the estimate of ${\rm I}_2$
in \cite[p.\,28]{ad}, we conclude that
${\rm I}_2\ls\|f\|_\lin.$

Combining the estimates for ${\rm I}_1$ and ${\rm I}_2$, we obtain \eqref{coltar},
which completes the proof of Theorem \ref{thm3.1}.
\end{proof}

\begin{proof}[Proof of Theorem \ref{thm1.1}]
By Theorem \ref{thm1.2}, it suffices to show that $T_\ast$ is
bounded from $\lo$ into $\wlo$. Fix any fixed $f\in\lo$ and
$t\in(0,\,\fz)$ ($t>\gz_0\|f\|_\lp/\mu(\cx)$ when $\mu(\cx)<\fz$, since the case
that $t\le\gz_0\|f\|_\lo/\mu(\cx)$  is trivial).
Applying Lemma \ref{lem2.2} in the case that $p=1$, with the same
notation as in the proof of Theorem \ref{thm1.2}, we write that
$f=g+h$. We have proven that there exists a positive constant $C_4$
such that, for almost every $x\in\cx$, $|g(x)|\le C_4t$.
Thus, from Theorem \ref{thm3.1} and $\lt$ boundedness
of $M_{(5)}\circ T$, we deduce that
\begin{eqnarray*}
\mu\lf(\lf\{x\in\cx:\ |T_\ast(g)(x)|>(C_4+1)C_3t\r\}\r)
&&\le\mu\lf(\lf\{x\in\cx:\ |M_{(5)}(Tg)(x)|>t\r\}\r)\\
&&\le t^{-2}\int_\cx|M_{(5)}(Tg)(x)|^2\,d\mu(x)\\
&&\ls t^{-2}\|g\|_\lt^2\ls t^{-1}\|f\|_\lo,
\end{eqnarray*}
where $C_3\in(0, \fz)$ is as in Theorem \ref{thm3.1}.
As in the proof of (i) $\Longrightarrow$ (ii) of Theorem
\ref{thm1.2}, we see that the proof of Theorem
\ref{thm1.1} is reduced to proving that, for all $f\in L^1(\mu)$ and $t\in(0,\,\infty)$,
\begin{equation}\label{eq3.2}
\mu\lf(\lf\{x\in\cx\setminus
\lf(\bigcup_j6^2B_j\r):\ |T_\ast(h)(x)|>t\r\}\r)\ls\frac{1}{t}\|f\|_\lo.
\end{equation}

By the estimates \eqref{eq2.x3}, \eqref{eq2.x4} and \eqref{eq2.x5}
in the proof of (i) $\Longrightarrow$ (ii) of Theorem \ref{thm1.2}, we see that,
for all $x\in\cx\setminus(\cup_j6^2B_j)$,
\begin{align*}
T_\ast(h)(x)&\ls\sum_{j}T_\ast(\varphi_j)(x)\chi_{6S_j}(x)
+\sum_{j}\frac{\chi_{6S_j\setminus(6^2B_j)}(x)}
{\lz(x,\,d(x,\,x_{B_j}))}\int_{B_j}|f(y)|\,d\mu(y)\\
&\hs+\sum_{j}\chi_{\cx\setminus6S_j}(x)
\int_\cx|K(x,\,y)-K(x,\,x_{B_j})||h_j(y)|\,d\mu(y)\\
&\hs+M_{(5)}\lf(\sum_{j}|h_j|\r)(x)
=:{\rm E}(x)+{\rm F}(x)+{\rm G}(x)+{\rm H}(x).
\end{align*}

The estimates for ${\rm F}(x),\ {\rm G}(x)$ and ${\rm H}(x)$ are the same as in the
proof of Theorem \ref{thm1.2}.

We only need to estimate ${\rm E}(x)$.
Applying Theorem \ref{thm3.1}, we conclude that
\begin{eqnarray*}
&&\mu\lf(\lf\{x\in\cx\setminus\lf(\bigcup_j6^2B_j\r):\ {\rm E}(x)>t\r\}\r)\\
&&\hs\le t^{-1}\sum_j\int_{6S_j}|T_\ast(\varphi_j)(x)|\,d\mu(x)\\
&&\hs\ls t^{-1}\lf\{\sum_j\lf[\int_{6S_j}M_{(5)}(T\varphi_j)(x)
\,d\mu(x)+\|\varphi_j\|_\lin\ \mu(6S_j)\r]\r\}\\
&&\hs\ls t^{-1}\lf\{\sum_j\lf[\|M_{(5)}(T\varphi_j)\|_\lt
[\mu(6S_j)]^\frac{1}{2}+\|\varphi_j\|_\lin\ \mu(6S_j)\r]\r\}\ls t^{-1}\|f\|_\lo,
\end{eqnarray*}
which implies \eqref{eq3.2} and hence completes the proof of Theorem \ref{thm1.1}.
\end{proof}

\medskip

{\bf Acknowledgements.} The authors sincerely wish to express their
deeply thanks to the referee for her/his very carefully reading and
also her/his so many careful, valuable and suggestive remarks which essentially
improve the presentation of this article. The authors would also like to thank Dr. Dongyong Yang
for some helpful discussions on the subject of this paper.

\bigskip

\noindent Suile Liu and Dachun Yang (Corresponding author)

\medskip

\noindent School of Mathematical Sciences,
 Beijing Normal University, Laboratory of Mathematics and Complex systems,
Ministry of Education, Beijing 100875, People's Republic of China

\medskip

\noindent{\it E-mail addresses}: \texttt{slliu@mail.bnu.edu.cn} (S. Liu)

                                \hspace{2.58cm}\texttt{dcyang@bnu.edu.cn} (D. Yang)

\medskip

\noindent Yan Meng

\medskip

\noindent School of Information,
 Renmin University of China, Beijing 100872, P. R. China

\medskip

\noindent{\it E-mail address}: \texttt{mengyan@ruc.edu.cn}
\end{document}